\newsavebox\myboxA
\newsavebox\myboxB
\newlength\mylenA
\newcommand*\xoverline[2][0.75]{%
	\sbox{\myboxA}{$\m@th#2$}%
	\setbox\myboxB\null% Phantom box
	\ht\myboxB=\ht\myboxA%
	\dp\myboxB=\dp\myboxA%
	\wd\myboxB=#1\wd\myboxA% Scale phantom
	\sbox\myboxB{$\m@th\overline{\copy\myboxB}$}%  Overlined phantom
	\setlength\mylenA{\the\wd\myboxA}%   calc width diff
	\addtolength\mylenA{-\the\wd\myboxB}%
	\ifdim\wd\myboxB<\wd\myboxA%
	\rlap{\hskip 0.5\mylenA\usebox\myboxB}{\usebox\myboxA}%
	\else
	\hskip -0.5\mylenA\rlap{\usebox\myboxA}{\hskip 0.5\mylenA\usebox\myboxB}%
	\fi}
\DeclareFontFamily{U}{matha}{\hyphenchar\font45}
\DeclareFontShape{U}{matha}{m}{n}{
	<5> <6> <7> <8> <9> <10> gen * matha
	<10.95> matha10 <12> <14.4> <17.28> <20.74> <24.88> matha12
}{}
\DeclareSymbolFont{matha}{U}{matha}{m}{n}
\DeclareMathSymbol{\Lt}{3}{matha}{"CE}
\DeclareMathSymbol{\Gt}{3}{matha}{"CF}
\DeclareSymbolFont{mathd}{OML}{ztmcm}{m}{it}
\DeclareMathSymbol{\valpha}{\mathord}{mathd}{11}
\newsavebox{\foobox}
\newcommand{\slantbox}[2][.3]
{%
	\mbox
	{%
		\sbox{\foobox}{#2}%
		\hskip\wd\foobox
		\pdfsave
		\pdfsetmatrix{1 0 #1 1}%
		\llap{\usebox{\foobox}}%
		\pdfrestore
	}%
}
\newcommand{\BC}{{\mathbb {C}}}
 \newcommand{\BR}{{\mathbb {R}}}
 \newcommand{\BZ}{{\mathbb {Z}}}
\newcommand{\GL}{{\mathrm {GL}}} 
\newcommand{\SL}{{\mathrm {SL}}} \newcommand{\PSL}{{\mathrm {PSL}}}
\newcommand{\SU}{{\mathrm{SU}}}
\newcommand{\ds}{\displaystyle}
\newcommand{\ra}{\rightarrow}
\def\-{^{-1}}
\def\lp {\left (}
\def\rp {\right )} 
\def\BCx{\BC^{\times}}
\def\BCO{\BC \smallsetminus \{0\}}
\renewcommand{\Re}{{\mathrm{Re}\,}}
\def\bfJ {\slantbox{$\mathbb{J}$\hskip 1 pt}}
\def\nwedge {\hskip - 2 pt \wedge \hskip - 2 pt }
\def\shskip{\hskip 0.5 pt}
\def\tw{\textit{w}}
\def\dx{d^{\times} \hskip -1 pt}
\newcommand{\pmtrix}[4]{\begin{pmatrix} #1 & #2  \\ #3 & #4 \end{pmatrix}}
\g@addto@macro\normalsize{\setlength\abovedisplayskip{3pt}}
\g@addto@macro\normalsize{\setlength\belowdisplayskip{3pt}}
\newcommand{\delete}[1]{}
\theoremstyle{plain}
\newtheorem{thm}{Theorem}[section] \newtheorem{cor}[thm]{Corollary}
\newtheorem{lem}[thm]{Lemma}
\newtheorem*{acknowledgement}{Acknowledgements}
\newtheorem {rem}[thm]{Remark}
\numberwithin{equation}{section}
\begin{document}

	\title{A Whittaker-Plancherel Inversion Formula for $\mathrm{SL}_2(\mathbb{C})$}

	\author{Zhi Qi}
	\address{School of Mathematical Sciences\\ Zhejiang University\\Hangzhou, 310027\\China}
	\email{zhi.qi@zju.edu.cn}

	\author{Chang Yang}
	\address{Key Laboratory of High Performance Computing and Stochastic Information Processing (HPCSIP)\\ College of Mathematics and Statistics \\ Hunan Normal University \\Changsha,  410081\\China}
	\email{cyang@hunnu.edu.cn}
		
	\thanks{The second author is supported by the Construct Program of the Key Discipline in Hunan Province.}

	\subjclass[2010]{22E46, 33C10}
	\keywords{Whittaker-Plancherel inversion formula, Bessel functions, Bessel transform}

	\begin{abstract}
		In this paper, we establish a Whittaker-Plancherel inversion formula for $\mathrm{SL}_2(\mathbb{C})$ from the analytic perspective of  the Bessel transform of Bruggeman and Motohashi. The formula gives a decomposition of the Whittaker-Fourier coefficient of a compactly supported function  on $\mathrm{SL}_2(\mathbb{C})$ in terms of its Bessel coefficients attached to irreducible unitary tempered representations of $\mathrm{SL}_2(\mathbb{C})$.
	\end{abstract}
	
	\maketitle
	
	\section{Introduction}\label{sec: Intro}
	
	Let $G=\SL_2(\BC)$. Define
\begin{align*}
N=\left\{n(u)=\begin{pmatrix}
1 & u \\
  & 1
\end{pmatrix}  :\, u\in \BC\right\}, \hskip 10 pt A= \left\{ s(z)=\pmtrix{z}{}{}{z^{-1}}  :\, z\in \BCx \right\}.
\end{align*}
Let $B = A N$ be the Borel subgroup of $G$. Let %$\varw$ be the Weyl element, namely,
\begin{align*}
\varw = \begin{pmatrix}
 & -1 \\
1 & 
\end{pmatrix}.
\end{align*}

Let $ C_c^{\infty}(G)$ be the space of smooth and compactly supported functions on $G$.

For $\lambdaup\in \BCx$, define the additive character $\psi = \psi_{\lambdaup}$ by
\begin{align}\label{1eq: defn of psi}
\psi_{\lambdaup}(u)=e(\lambdaup u+\overline{\lambdaup u}),
\end{align}  
with the usual abbreviation $e(x)=e^{2\pi ix}$. %Clearly, $\psi$ may be regarded as a character of $N$.

For a function $f\in C_c^{\infty}(G)$ define the associated Whittaker function by \begin{align}\label{1eq: defn of Wf(g)}
W_f^{\psi} (g)  = \int_{\BC} f (n(u) g) \psi (u) d u, \hskip 10 pt g\in G.
\end{align}
where $d u$ is {\it twice}  the Lebesgue measure on $\BC$.
The {\it Whittaker-Fourier coefficient} of $f$ is defined by \footnote{In number theory, when $f$ is a function on $G$ (not compactly supported) that comes from a Maass cusp form, there is a similar definition of $W_{{\psi_{\lambdaup}}} (f) = W_f^{\psi_{\lambdaup}} (1)$   related to the $\lambdaup$-th Fourier coefficient of $f$.}
\begin{align}\label{1eq: Whittaker-Fourier coefficient}
W_{\psi}(f)= W^{\psi}_f (1) = \int_{\BC}f(n(u))\psi (u)d u.
\end{align}
%\footnote{Note that  $d u$ is not normalized to be self-dual with respect to $\psi$.}. 
	
Next, we introduce the {\it Bessel coefficients} of a function $f \in C_c^{\infty}(G)$. 

For  $t \in \BR$ and $m \in \BZ$, let $\pi_{it, \shskip m} $ denote the principal series representation of $G$ unitarily induced from the following character of $B$,
\begin{align*}
s (z) n(u) \ra |z|^{2 i t} (z/|z|)^{m}.
\end{align*}
Precisely, $\pi_{it, \shskip m} $ is the representation of $G$  by right shifts on the space of smooth functions $\varphi: G\ra \BC$ satisfying $$\varphi(s(z)n(u)g)= |z|^{2 i t + 2} (z/|z|)^{m} \varphi(g), \hskip 10 pt g\in G. $$ %with the action of $G$ the right translation. 
It is well known that these $\pi_{it, \shskip m} $ are all the irreducible unitary {\it tempered} representations of $G$, up to the equivalence $\pi_{it, \shskip m} \cong  \pi_{- it, \shskip - m} $. %The central character of $\pi =   \pi_{it, \shskip m}$ is given by $\omega_{\pi} (\pm 1) = (\pm 1)^{m} $. %Define
%\begin{align}\label{1eq: Plancherel measure}
%\mu (t, m) = 4 t^2 + m^2/4,
%\end{align}
%then  the Plancherel measure of $G$ is given by $d \mu (\pi_{it, \shskip m}) = \mu (t, m) d t$ on the parameter space $ \BR \times \BZ$.

Let $J_{\nu} (z)$ be the classical Bessel function of the first kind,
\begin{equation}\label{1eq: series expansion of J}
	J_{\nu} (z) = \sum_{n=0}^\infty \frac {(-)^n (z/2)^{\nu+2n } } {n! \Gamma (\nu + n + 1) }.
\end{equation} 
We first introduce % for $t \in \BR$ and  $m\in \BZ$, % we adopt the notation
\begin{align}\label{1eq: defn of J mu m (z)}
J_{it,\shskip m}(z)=J_{-  it - \frac{1}{2} m}(z)J_{-   it + \frac{1}{2} m} (\bar{z}).  
\end{align}  
The function $J_{it,\shskip m}(z)$ is well defined for $z \in \BCx$ in the sense that the expression on the right of \eqref{1eq: defn of J mu m (z)} is independent on the choice of the argument of $z$ modulo $2 \pi$. %Moreover,  $J_{it,\shskip m}(z)$ is an even or odd function if $m$ is even or odd, respectively.
Define
\begin{equation}\label{1eq: Bessel function}
\bfJ_{it,\shskip m}(z)=\left\{\begin{aligned}
& \dfrac{ 1}{i \sinh( \pi t)}\big(J_{i t, \shskip m}(  z)-J_{-i t,-m}( z) \big),\ & & \text{ if $m$ is even,} \\
& \dfrac{ 1 }{i \cosh( \pi t)} \big(J_{i t,\shskip m}(  z)+J_{-i t,-m}( z) \big),\ & &\text{ if $m$ is odd.}
\end{aligned}\right.  
\end{equation}  \footnote{The definition of $\bfJ_{it,\shskip m}(z)$ is slightly different  from that in \cite{Qi-II-G} or \cite{Chai-Qi-Bessel}. The new normalization here is only for notational convenience.}
It is understood that in the non-generic case when $t = 0$ and $m$ is even the right hand side should be replaced by its limit.

For $\pi=\pi_{it,\shskip m}$ and $\psi=\psi_{\lambdaup}$, we define the attached Bessel function $ j_{\pi,\shskip\psi}$, supported on the open Bruhat cell $B\varw B = N A \varw N$, such that
\begin{equation}\label{1eq: defn of j}
j_{\pi,\shskip \psi}(s(z)\varw)= 2 \pi^2 \left|\lambdaup z \right|^2 \bfJ_{it,\shskip m}(4 \pi \lambdaup z), 
\end{equation}         
and that $j_{\pi,\shskip \psi}$ is left and right $(\psi,N)$-equivalent, namely,
\begin{align}\label{1eq: Bessel function--NN rule}
j_{\pi,\shskip \psi}(n(u)s(z)\varw \shskip n(\varv))=\psi(u)\psi(\varv)j_{\pi,\shskip \psi}(s(z) \varw).
\end{align}
Let $f\in C_c^{\infty}(G)$. We now define the {\it Bessel coefficient} of $f$ attached to $\pi$ by
\begin{equation}\label{1eq: Bessel coefficient}
J_{\pi,\shskip \psi}(f)=\int_G f(g)j_{\pi,\shskip \psi}(g)dg,
\end{equation}
where $dg$ is the Haar measure on $G$ defined by $dg= |z|^{-4} \shskip d u \shskip d \varv \shskip \dx z  $ for the coordinates $g = n(u)s(z) \varw \shskip n(\varv)$ on the Bruhat cell, with $\dx z = |z|^{-2} d z $ as usual. The Bessel coefficient (Bessel distribution) $J_{\pi,\shskip \psi}(f)$
has been investigated in \cite{Chai-Qi-Bessel}. Some results of  \cite{Chai-Qi-Bessel} will be recollected in Appendix \ref{appendix: Bessel}, but at this moment one only needs to know that the integral in \eqref{1eq: Bessel coefficient} is convergent for all  $f\in C_c^{\infty}(G)$.

Our Whittaker-Plancherel inversion formula for $G= \SL_2 (\BC)$ is as follows.
\begin{thm}\label{thm: main}
Let notation be as above.	For $f\in C_c^{\infty}(G)$ we have
	\begin{align} \label{1eq: Main Theorem}
	W_{\psi_{\lambdaup}}(f) 
	= \frac 1 { 32 \pi^4 |\lambdaup|^2} \sum_{m =-\infty}^{\infty}\int_{-\infty}^{\infty} J_{\pi_{it,\shskip m},\shskip \psi_{\lambdaup}}(f)   ( t^2 + m^2/4   ) dt.
	\end{align}
\end{thm}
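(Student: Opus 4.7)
The plan is to carry out an \emph{unfold-and-invert} argument: unfold each Bessel coefficient on the right-hand side of \eqref{1eq: Main Theorem} over the open Bruhat cell, interchange the spectral $(t,m)$-integration with the spatial one, and then appeal to a Bessel-Plancherel inversion on $\BCx$ to collapse the resulting kernel to the Whittaker-Fourier coefficient on the left.

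Concretely, I would parametrize $g$ on the open cell by the Bruhat coordinates $g = n(u)s(z)\varw n(\varv)$, substitute into \eqref{1eq: Bessel coefficient}, and use the equivariance \eqref{1eq: Bessel function--NN rule} to fold the $u$-integral into the Whittaker function \eqref{1eq: defn of Wf(g)}. Together with the explicit form \eqref{1eq: defn of j} of the Bessel function, this yields
\begin{align*}
J_{\pi_{it,\shskip m},\shskip \psi_{\lambdaup}}(f) = 2\pi^2 |\lambdaup|^2 \int_{\BCx}\int_{\BC} W_f^{\psi_{\lambdaup}}\!\big(s(z)\varw n(\varv)\big)\, \psi_{\lambdaup}(\varv)\, \bfJ_{it,\shskip m}(4\pi\lambdaup z)\, \frac{d\varv\, \dx z}{|z|^2}.
\end{align*}
Plugging this into \eqref{1eq: Main Theorem}, the prefactor $\frac{1}{32\pi^4|\lambdaup|^2}\cdot 2\pi^2|\lambdaup|^2$ collapses to $\frac{1}{16\pi^2}$, and formally exchanging $\sum_m \int(t^2+m^2/4)\,dt$ with the spatial integration reduces the right-hand side to
\begin{align*}
\frac{1}{16\pi^2}\int_{\BCx}\int_{\BC} W_f^{\psi_{\lambdaup}}\!\big(s(z)\varw n(\varv)\big)\, \psi_{\lambdaup}(\varv)\, K(4\pi\lambdaup z)\, \frac{d\varv\, \dx z}{|z|^2},
\end{align*}
where $K(w) = \sum_{m=-\infty}^{\infty}\int_{-\infty}^{\infty}\bfJ_{it,\shskip m}(w)(t^2+m^2/4)\,dt$ is the spectral Bessel kernel.

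It then remains to establish the Bessel-Plancherel inversion asserting that the above $K$-integral reproduces $W_f^{\psi_{\lambdaup}}(1) = W_{\psi_{\lambdaup}}(f)$. This is a complex-analytic analogue of the Kontorovich-Lebedev inversion, tailored to the Bruggeman-Motohashi Bessel transform on $\BCx$ with spectral measure $(t^2+m^2/4)\,dt$ summed over $m \in \BZ$; the numerical constant $1/16\pi^2$ should match the $\SL_2(\BC)$ Plancherel measure after the Bruhat-Whittaker bookkeeping done in the previous paragraph.

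The principal obstacle is the rigorous execution of these formal steps. The kernel $K(w)$ does not converge as a classical sum-integral, since $\bfJ_{it,\shskip m}(w)$ decays only polynomially in $(t,m)$ after division by $\sinh(\pi t)$ or $\cosh(\pi t)$, and the Whittaker function has only moderate decay along the open cell. I would therefore regularize the spectral integration by a smooth damping factor such as $e^{-\varepsilon(t^2+m^2)}$, compute the regularized kernel $K_\varepsilon$ in closed form using the Mellin-Barnes representations of $\bfJ_{it,\shskip m}$ and the integral identities collected in Appendix \ref{appendix: Bessel}, verify the inversion identity with $K_\varepsilon$ in place of $K$ by absolutely convergent manipulations, and finally let $\varepsilon \to 0^+$, invoking the compact support of $f$ and uniform control of $W_f^{\psi_{\lambdaup}}$ on the Bruhat cell to justify dominated convergence. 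Carrying out the explicit evaluation of $K_\varepsilon$ and controlling the limit $\varepsilon \to 0^+$ will be the technical heart of the proof.
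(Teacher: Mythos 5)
There is a genuine gap at the central step of your plan, namely the assertion that the spectral kernel $K(w)=\sum_m\int\bfJ_{it,\shskip m}(w)(t^2+m^2/4)\,dt$, once regularized, ``reproduces $W_f^{\psi}(1)$.'' The difficulty is structural, not merely technical: the identity element does not lie on the open Bruhat cell $NA\varw N$, and the Bessel coefficients see $W_f^{\psi}$ only through its restriction to that cell. A Plancherel/reproducing-kernel identity for the Bruggeman--Motohashi transform can only collapse the spatial integral to a \emph{pointwise value} of the orbital integral $O_f(z)=\int\!\!\int f(n(u)s(z)\varw\shskip n(\varv))\psi(u)\psi(\varv)\,du\,d\varv$ at some finite $z$; it cannot by itself produce $W_f^{\psi}(1)$, which is not such a value. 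The missing ingredient is Jacquet's asymptotic expansion $O_f(z)=\psi(2z)|z|H_+(1/z)+\psi(-2z)|z|H_-(1/z)$ with $H_{\pm}(0)=W_f(\pm1)/2$: the Whittaker--Fourier coefficient appears only as the coefficient of the leading oscillatory term of $O_f(z)$ as $|z|\to\infty$, i.e.\ at the boundary of the cell. So your kernel $K$ would have to be a distribution ``supported at infinity'' pairing against that leading coefficient, which is far more singular than a Kontorovich--Lebedev-type delta on the diagonal; indeed $\bfJ_{it,\shskip m}(w)\asymp(t^2+m^2)^{-1/2}$ for fixed $w$, so the regularized $K_\varepsilon(w)$ blows up at every fixed $w$ as $\varepsilon\to0$ and there is no closed form to aim for. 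Without Jacquet's theorem (or an equivalent stationary-phase analysis of $O_f$), the plan cannot close.

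The paper's route avoids $K$ entirely by running the two operations in the opposite order: it first sets $G_f(z)=O_f(z/4\pi)/|z|^2$, observes that $J_{\pi_{it,\shskip m}}(f)=32\pi^4\,\breve G_f(it,m)$, applies the Bruggeman--Motohashi inversion to recover $G_f(z)$ pointwise from its Bessel transform, and only then lets $|z|\to\infty$ in the quotient $4\pi|z|G_f(z)/\cos(z+\bar z)$, using the matching asymptotics $\bfJ_{it,\shskip m}(z)\sim(\pi|z|)^{-1}((-)^m e((z+\bar z)/2\pi)+e(-(z+\bar z)/2\pi))$ to turn each spectral term into $\breve G_f(it,m)$ itself. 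The analytic work then reduces to two tractable items: a rapid-decay bound $\breve G_f(it,m)=O((t^2+m^2+1)^{-A})$ obtained by repeated integration by parts against the Bessel operator $\nabla$ (using that the space of functions of Jacquet's shape is $\nabla$-stable), and a uniform bound $|z|\,\bfJ_{it,\shskip m}(z)\Lt t^2+m^2+1$ to justify dominated convergence. If you wish to salvage your unfolding framework, you must in any case first establish the asymptotic expansion of $O_f$; at that point the paper's order of operations is the natural one.
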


Observe that $J_{it,\shskip m}(z)$ is an even or odd function according as $m$ is even or odd. So the formula \eqref{1eq: Main Theorem} splits into a pair of similar formulae if parity conditions are imposed on $f$. Also note that $ \pi_{it,\shskip m} $ is trivial on the center if and only if $m$ is even, in which case $ \pi_{it,\shskip m} $ may be regarded as representation of $\PSL_2 (\BC)$. So the even case of \eqref{1eq: Main Theorem} is the following formula for $ G\shskip / \{ \pm 1 \} = \PSL_2 (\BC)$.

\begin{cor} \label{thm: even}
Suppose that $f\in C_c^{\infty}(G)$ is even, namely, $f(g)=f(-g)$ for all $g\in G$. Then
\begin{equation}
W_{\psi_{\lambdaup}} (f)= \frac{1}{ 32 \pi^4 |\lambdaup|^2} \sum_{d=-\infty}^{\infty} \int_{-\infty}^{\infty} J_{\pi_{it, \shskip 2 d}, \shskip \shskip \psi_{\lambdaup} }(f) \big(t^2+ d^2\big)dt.
\end{equation} 
\end{cor}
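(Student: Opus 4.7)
The plan is to deduce Corollary \ref{thm: even} directly from Theorem \ref{thm: main} by showing that, when $f$ is even, every term with $m$ odd in the sum \eqref{1eq: Main Theorem} vanishes; the remaining even-index terms can then be reindexed by $m = 2d$, so that $m^2/4 = d^2$, which yields the claimed formula.

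The crux is therefore to prove $J_{\pi_{it,\shskip m},\shskip \psi_{\lambdaup}}(f) = 0$ whenever $m$ is odd and $f$ is even. I would argue via the central character: since $-I = s(-1)$ lies in $Z(G)$ and the inducing character of $\pi_{it,\shskip m}$ sends $s(-1)$ to $(-1)^m$, the associated Bessel function should transform as $j_{\pi_{it,\shskip m},\shskip \psi_{\lambdaup}}(-g) = (-1)^m j_{\pi_{it,\shskip m},\shskip \psi_{\lambdaup}}(g)$. To verify this from the definitions, write $g = n(u) s(z) \varw \shskip n(\varv)$; since $-I$ is central and $s(-1)s(z) = s(-z)$, one has $-g = n(u)\shskip s(-z) \shskip \varw \shskip n(\varv)$, so by the $(\psi,N)$-equivariance \eqref{1eq: Bessel function--NN rule} the question reduces to comparing $j_{\pi_{it,\shskip m},\shskip \psi_{\lambdaup}}(s(-z)\varw)$ with $j_{\pi_{it,\shskip m},\shskip \psi_{\lambdaup}}(s(z)\varw)$. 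By \eqref{1eq: defn of j} this amounts to the identity $\bfJ_{it,\shskip m}(-w) = (-1)^m \bfJ_{it,\shskip m}(w)$, which follows from the elementary phase rule $J_\nu(-z) = e^{i\pi\nu} J_\nu(z)$ (immediate from the series \eqref{1eq: series expansion of J}) combined with $\overline{-z} = -\bar z$: the product $J_{-it-m/2}(-z) J_{-it+m/2}(-\bar z)$ picks up an overall factor of $e^{i\pi(-it-m/2)}\shskip e^{-i\pi(-it+m/2)} = e^{-i\pi m} = (-1)^m$ relative to $J_{it,\shskip m}(z)$, and the prefactor $|\lambdaup z|^2$ in \eqref{1eq: defn of j} is insensitive to the sign of $z$.

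With this symmetry established, the substitution $g \mapsto -g$ in the Haar-invariant integral \eqref{1eq: Bessel coefficient}, combined with $f(-g) = f(g)$, forces
\[
J_{\pi_{it,\shskip m},\shskip \psi_{\lambdaup}}(f) = \int_G f(-g)\shskip j_{\pi_{it,\shskip m},\shskip \psi_{\lambdaup}}(-g) \shskip dg = (-1)^m J_{\pi_{it,\shskip m},\shskip \psi_{\lambdaup}}(f),
\]
so the left-hand side vanishes for odd $m$. Substituting back into \eqref{1eq: Main Theorem} and setting $m = 2d$ completes the proof. The only delicate point in this plan is tracking branches in the phase identity for $\bfJ_{it,\shskip m}$, but since the paper already notes that $J_{it,\shskip m}(z)$ is well defined on $\BCx$ independently of the choice of $\arg z \pmod{2\pi}$, consistent branches for $-z$ and $-\bar z$ are forced upon us and the sign computation is unambiguous.
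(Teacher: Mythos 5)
Your argument is correct and coincides with the paper's: the remark immediately preceding the corollary makes exactly your observation that $J_{it,\shskip m}(z)$ (hence $\bfJ_{it,\shskip m}(z)$ and $j_{\pi_{it,\shskip m},\shskip\psi}$) is even or odd according to the parity of $m$, so that the odd-$m$ Bessel coefficients of an even $f$ vanish and \eqref{1eq: Main Theorem} collapses to the even-index sum with $m^2/4=d^2$. Note only that the paper's actual proof of Theorem \ref{thm: main} runs in the reverse direction---it proves the even case (i.e.\ the corollary) directly via the evenness of $O_f$ and $G_f$, treating the odd case analogously---but since the theorem is established without presupposing the corollary, your deduction is not circular.
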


\subsection*{Remarks}

A similar Whittaker-Plancherel formula for $G = \SL_2 (\BR)$ was obtained by Baruch and Mao \cite{BaruchMao-Whittaker}. They employ an analytic Bessel inversion formula of Kuznetsov\footnote{According to an anonymous referee, this formula should indeed be attributed to Sears and Titchmarsh \cite[(4.4)-(4.7)]{Sears-Titchmarsh} (see also \cite[(4.14.1), (4.14.2)]{Titchmarsh-Eigenfunction}). The formula of Kuznetsov in \cite[Appendix]{Kuznetsov} however is slightly more general.} while developing his celebrated trace formula for $\PSL_2 (\BZ)$ in \cite{Kuznetsov}. Both our main results and methods are in parallel with those of  \cite{BaruchMao-Whittaker} although the analysis here is quite different. Likewise, the Bessel inversion formula that we use here was discovered by Bruggeman and Motohashi in their work on the Kuznetsov trace formula for $\PSL_2 (\BZ[i])$. 

Consider the $\widebar \psi$-Whittaker space $C_c^{\infty} (N\backslash G; \widebar \psi)$ of smooth functions $W : G \ra \BC$, compactly supported modulo $N$, satisfying 
\begin{align*}
W (n (u) g) = \widebar \psi (u) W (g), \hskip 10 pt g \in G.
\end{align*}
Then each Whittaker function in  $C_c^{\infty} (N\backslash G; \widebar \psi)$ is a $W^{\psi}_f$ associated to some $f \in C_c^{\infty} (G)$. Recall that $W_{\psi} (f) = W^{\psi}_f (1)$. On the other hand, in view of \eqref{1eq: Bessel function--NN rule}, we have
\begin{align*}
J_{\pi,\shskip \psi}(f) = \int_{\BC} \int_{\BCx}  W_f^{\psi} ( s(z) \varw \shskip n(\varv))j_{\pi,\shskip \psi}( s(z) \varw) \psi (\varv) |z|^{-4}  \dx z \shskip d \varv .
\end{align*}
Hence, by simple comparison, the reader may find that our formula is an analogue to the Plancherel formula of Harish-Chandra for $\SL_2 (\BC)$. See for example \cite[Theorem 11.2]{Knapp-Book}. In contrast, the formula for $\SL_2 (\BR)$ in \cite{BaruchMao-Whittaker} is an analogue of \cite[Theorem 11.6]{Knapp-Book}. %As explained in \cite[\S XI.2]{Knapp-Book}, the Plancherel formula  of Harish-Chandra for $\SL_2 (\BR)$ is harder to establish than that for $\SL_2 (\BC)$ as there are two non-conjugate Cartan subgroups in $\SL_2 (\BR)$. On the contrary, the analysis here for $\SL_2 (\BC)$ is more difficult than that for $\SL_2 (\BR)$ in \cite{BaruchMao-Whittaker}. See \S \ref{sec: comparison} for more details.

%Our point here is that the analysis is always   harder  while the representation theory is relatively simpler when working over $\BC$ instead of $\BR$. We have to deal with the additional complexity arising from the analysis on the circle $\{ z : |z| = 1 \} \subset \BCx$, like a radial integral or, in our case, an {\it infinite} sum  as in \eqref{1eq: Main Theorem}. The reader may find evidences for this general principle in \cite{Qi-Bessel}, \cite{BaruchMao-Real} and \cite{Chai-Qi-Bessel} (see also \cite{Qi-Sph,Qi-II-G}).

A general decomposition formula for the space  $L^2(N \backslash G; \psi)$ of square integrable $\psi $-Whittaker functions of a real reductive group $G$ was obtained in  \cite[\S 15.9]{RRG-II}. Our formula, in contrast, is  point-wise and
does not follow from his decomposition formula as indicated in \cite{BaruchMao-Whittaker}.  Moreover, our proof is completely different from the proof of Wallach, since we do not
use Harish-Chandra's Plancherel formula. In particular, our proof brings to the front objects and
tools which we think are interesting by themselves: Bessel functions and Bessel distributions of
representations, orbital integrals and the Bessel transform of Bruggeman and Motohashi.

Recently, it was pointed out in \cite{vdBK-Wallach} that a lemma in the last chapter of \cite{RRG-II} is not correct. In response, Wallach gave a fixed proof of his Whittaker-Plancherel theorem in \cite{Wallach-Correction}. He also presented a point-wise Whittaker-Plancherel formula for a $K$-finite  $f$ in Harish-Chandra's Schwartz space on $G$ (see \cite[Theorem 48]{Wallach-Correction}).  Moreover, the
Whittaker-Plancherel formula for a reductive $p$-adic group $G$ was developed by Sakellaridis, Venkatesh \cite[\S 6.3]{Sak-Venkatesh-Periods} and Delorme \cite{Delorme-W-P} by different methods. The proof of Sakellaridis and Venkatesh is relatively short and can be readily adapted to real groups as in the works of Beuzart-Plessis \cite{B-P-1,B-P-2} (see   \cite[Proposition 2.14.2]{B-P-2}).

Finally, some remarks on the technical details are in order. Our analysis is quite different from that in Baruch and Mao \cite{BaruchMao-Whittaker}. However, if we use the differential equation instead of the recurrence relations for Bessel functions, along with an observation of the referee, we would have a simpler proof in the $\mathrm{SL}_2 (\mathbb{R})$ case. Moreover, their estimates for the Bessel transforms may be considerably improved.  See \S \ref{sec: bounds for Gf}.

\subsection*{Assumption} Subsequently, we shall assume, as we may without essential loss of generality, that $\lambdaup = 1$ so that $\psi (u) = \psi_1 (u) = e (u + \overline u)$. It is only up to the conjugation by $ s (\sqrt{\lambdaup})$. Also, the $\psi$ will be suppressed from the notation for simplicity.	

In fact, for general $\lambdaup$, we fix a $\sqrt \lambdaup $ and set $f_{\lambdaup}(g)=f\bigl(s(\sqrt\lambdaup)^{-1}g \shskip s(\sqrt\lambdaup) \bigr)$. Then we readily see that $W_{\psi_{\lambdaup}}(f)=W_{\psi_1}(f_{\lambdaup}) / |\lambdaup|^{ 2}$. Also, %we have $O_{f,\shskip \psi_{\lambdaup}}(z)=|\lambdaup|^{-4}O_{f_{\lambdaup},\shskip \psi_1}(\lambdaup z)$. I
we may  verify that $J_{\pi,\shskip\psi_{\lambdaup}}(f)=J_{\pi,\shskip\psi_1}(f_{\lambdaup})$. Hence the formula \eqref{1eq: Main Theorem} in the general case follows from the case $\lambdaup=1$.

\begin{acknowledgement}
Thanks are due to the anonymous referee for many remarks and suggestions, and for a crucial observation {\rm(}Lemma {\rm\ref{lem: referee's observation}}{\rm)} that greatly simplifies our arguments in an earlier draft of the paper and that leads us to a considerably improved estimate for the Bessel transform. %	We thank the referee for careful 	readings, helpful comments and, in particular, an update of the current development on the Whittaker-Plancherel formula. 
\end{acknowledgement}

\section{Orbital integrals and the Bessel transform}

In this section, we start our study with the Bessel coefficients $J_{\pi }(f)$ on the right-hand side of \eqref{1eq: Main Theorem}. It will be shown how one can express $J_{\pi }(f)$ as the Bessel transform of an orbital integral for $f$ in a simple way. 

    \subsection{Orbital integrals} Recall that $\psi (u) = e (u + \overline u)$. 
    For $f\in C_c^{\infty}(G)$, define
    \begin{align}\label{2eq: definition--orbital integral}
    O_{f}(z)&=\iint f (n(u)s(z) \varw \shskip n(\varv) )\psi(u)\psi(\varv)du \shskip d\varv , \hskip 10 pt z \in \BCx.
    \end{align}
Since the integrand is compactly supported,  the integral in \eqref{2eq: definition--orbital integral} is absolutely convergent. Moreover, it is easy to check that $O_f (z)$ is a smooth function on $\BCx$ which vanishes around $z = 0$. These orbital integrals were investigated by Jacquet \cite[\S 7]{Jacquet-RTF} in a different context. 

By the definitions in  \eqref{1eq: Bessel coefficient} and \eqref{2eq: definition--orbital integral}, along with \eqref{1eq: Bessel function--NN rule}, we have
\begin{align*}
J_{\pi }(f)&=\int_G f(g)j_{\pi }(g)dg \\
&=  \sideset{}{}{\int \hskip -4pt \int \hskip -4pt \int}f(n(u)s(z)\varw \shskip n(\varv))j_{\pi} (n(u)s(z)\varw n(\varv))\cdot |z|^{-4}du \shskip d \varv \shskip \dx z  \\
&=\int  j_{\pi }(s(z)w)\left(\int \hskip -4pt \int f
 ( n(u)s(z)\varw \shskip n(\varv) ) \psi(u)\psi(\varv) du \shskip d \varv \right)|z|^{-4}\dx z  \\
&=\int j_{\pi}(s(z)\varw) O_{f}(z)|z|^{- 4}\dx z. 
\end{align*}
More explicitly, in view of \eqref{1eq: defn of j}, we have
\begin{align*}%\label{2eq:Bessel distribution=integral transform}
J_{\pi_{it,\shskip m} }(f) = 2 \pi^2 \int_{\BC \smallsetminus\{0\}} \bfJ_{it,\shskip m}(4 \pi z) O_{f}(z)|z|^{- 2}\dx z. 
\end{align*}
Now we set 
\begin{align}\label{2eq:definition--Gf}
	G_{f}(z)= O_{f}(z/4\pi) / |z|^{2} 
\end{align}
so that 
\begin{align}\label{2eq:Bessel distribution=BM of Orbit}
	J_{\pi_{it,\shskip m} }(f)=  32\pi^4 \int_{\BC \smallsetminus\{0\}} \bfJ_{it,\shskip m}( z) G_f(z)\dx z.  
\end{align}
This immediately brings into mind the Bessel transform of Bruggeman and Motohashi.

\subsection{The Bessel transform of Bruggeman and Motohashi}

%Let $G (z) $ be a smooth function on $\BC \smallsetminus \{0\}$ such that $G (z) $ vanishes in a neighborhood of $0$ and that
%$G (z) = O (1/ |z|^{A} )$ for some $ A > 0$. 
Let $G (z)$ be a function in the space $ L^1 (\BCO, \dx z )  $.
Following \cite{B-Mo, B-Mo2}, we define its Bessel transform by
\begin{align} \label{2eq:Bessel transform--definition}
\breve{G} (it ,  m)=\int_{\BC\smallsetminus\{0\}}  \bfJ_{it, \shskip m}(z) G(z) \dx z.
\end{align}
Since $ \bfJ_{it, \shskip m}(z)  $ is of order  $O (1)$ at $0$ and $O(1/|z|)$ at $\infty$ except for the non-generic case when $t =0$ and $m$ even (see \eqref{appendix: J < 1} and \eqref{appendix: J asymptotic}), it follows that these integrals are absolutely convergent if $(it, m)$ is generic.

%\red{add: convergence}
%Note that $\breve{G} (it ,  m)$ vanishes if $G (z)$ and $m$ have opposite parity.

The Bessel inversion formula of Bruggeman and Motohashi in \cite{B-Mo} was originally for $\PSL_2 (\BC)$. It was soon generalized to $\SL_2 (\BC)$ in the thesis of Lokvenec-Guleska \cite{B-Mo2}. We reformulate her formula in the following theorem.\footnote{Note that Lokvenec-Guleska normalizes the Bessel functions in an unnatural way to be all even.}

\begin{thm}[Bruggeman, Motohashi and Lokvenec-Guleska]\label{theorem::Bessel transform--Inversion}
	Assume that $G (z) \in  L^1 (\BCO, \dx z ) \cap L^2 (\BCO, \dx z)$ is continuous and that 
	%\begin{align}\label{1eq: condition 2 for G}
$$	\breve{G} (it , m) = O\bigl( 1/ (t^2+ m^2 + 1)^{q} (|m|+1) \bigr)$$  for some $q > 1$. 
Then 
	\begin{align}\label{2eq::Bessel transform--Inversion formula}
	G (z)=\frac{1}{8}\sum_{m = -\infty}^{\infty} \int_{  -\infty}^{\infty} (-)^m \bfJ_{it, \shskip m}(z)\breve{G} (it , m)  (t^2+m^2/4 )d t.
	\end{align} 
\end{thm}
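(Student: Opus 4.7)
The plan is to reduce the two-dimensional spectral inversion over $\BCx$ to a family of one-dimensional Hankel-type inversions indexed by an angular Fourier mode. Working in polar coordinates $z = re^{i\theta}$, I would expand $G(re^{i\theta}) = \sum_{n \in \BZ} G_n(r) e^{in\theta}$. A direct inspection of the Taylor series \eqref{1eq: series expansion of J} applied to the defining product $J_{-it-m/2}(z) J_{-it+m/2}(\bar z)$ shows that $\bfJ_{it,m}(re^{i\theta})$, expanded in $\theta$, has Fourier support only on integers $n$ with $n \equiv m \pmod{2}$, with radial coefficients given by explicit Kontorovich--Lebedev / Hankel type kernels depending on $it$, $n$, and $m$. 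Consequently both the transform \eqref{2eq:Bessel transform--definition} and the candidate inversion \eqref{2eq::Bessel transform--Inversion formula} diagonalize across angular modes and further split into an even and an odd piece according to the parity of $m$, which matches the parity dichotomy already visible in the definition \eqref{1eq: Bessel function}.

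Once the statement is reduced mode-by-mode, the inversion claim becomes a scalar assertion: reconstruct each radial coefficient $G_n(r)$, $r \in (0,\infty)$, from its integrals against the one-parameter family of radial kernels, combined via the spectral measure $(t^2 + m^2/4)\shskip dt$ and summed over $m$ of fixed parity. This is the scalar Bessel/Kontorovich--Lebedev inversion formula, which is essentially the Sears--Titchmarsh identity pointed to in the introductory remarks of the paper. I would either quote this scalar inversion directly or, for a self-contained derivation, pass to Mellin transforms on $r$ and appeal to the reflection formula for $\Gamma$, which turns the Bessel kernels into gamma ratios and converts the spectral density $(t^2+m^2/4)$ into the familiar Plancherel weight for $\SL_2(\BC)$.

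The analytic hypotheses feed in as follows. The bounds $\bfJ_{it,m}(z) = O(1)$ near $0$ and $O(1/|z|)$ at infinity recorded in the appendix, together with $G \in L^1 \cap L^2$, give absolute convergence of the transform \eqref{2eq:Bessel transform--definition}, while the decay hypothesis on $\breve G$ with $q>1$ ensures absolute convergence of the spectral sum/integral in \eqref{2eq::Bessel transform--Inversion formula} and legitimizes the interchanges of sum, integral, and Fourier expansion needed in the reduction. The non-generic case $t = 0$ with $m$ even enters only through the limit definition of $\bfJ_{0,m}(z)$ and affects a measure-zero locus in the spectral integral. The main obstacle I anticipate is bookkeeping rather than analysis: different references use differently normalized Bessel kernels (the footnote warns that Lokvenec-Guleska renormalizes hers artificially to be even in $(t,m)$), so most of the work is a careful translation of conventions in order to land on the precise sign $(-)^m$ and the constant $1/8$ stated in the theorem, together with a matching of the measure $\dx z$ and the doubled Lebesgue convention used throughout the paper.
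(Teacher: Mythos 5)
There is a genuine gap, and it sits at the very first step. Your reduction rests on the claim that the transform ``diagonalizes across angular modes,'' but it does not. Writing $z=re^{i\theta}$ and expanding the defining product via \eqref{1eq: series expansion of J}, the general term of $J_{-it-m/2}(z)J_{-it+m/2}(\bar z)$ carries the angular phase $e^{i(2(n_1-n_2)-m)\theta}$, so $\bfJ_{it,\shskip m}(re^{i\theta})$ has nonzero Fourier coefficients at \emph{every} $n\equiv m \ (\mathrm{mod}\ 2)$, not only at $n=m$ (the same is visible in the integral representation \eqref{appendix: Integral representation}, where $Y(ye^{i\phi})^2=y^2+y^{-2}+2\cos 2\phi$ depends on $\phi$). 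Consequently $\breve G(it,m)$ mixes all angular modes $G_n$ of the correct parity, and conversely each $G_n$ must be reconstructed from the full two-parameter family $(t,m)$; you do not obtain a family of decoupled one-dimensional Hankel/Kontorovich--Lebedev problems. Even granting some reduction, the citation you defer to does not cover what remains: Sears--Titchmarsh is the inversion for the $J$-Bessel transform of real order on the half-line (the $\mathrm{SL}_2(\mathbb{R})$ kernel), whereas the two-parameter complex inversion is precisely the theorem to be proved and is the substantial content of Bruggeman--Motohashi and Lokvenec-Guleska. The Mellin/reflection-formula alternative is likewise only a gesture at the hard analysis, not a proof.

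The paper's proof is entirely different and targets a different difficulty, which your proposal does not engage: the inversion formula is already known, but only for $G\in C_c^{\infty}(\BCO)$, and the issue is extending it to the class of $G$ in the statement. The paper takes as input the Parseval--Plancherel formula \eqref{appendix: Parseval} of Bruggeman--Motohashi and Lokvenec-Guleska, pairs $G$ against an arbitrary $F\in C_c^{\infty}(\BCO)$, unfolds $\overline{\breve F(it,m)}$ as an integral over $z$, and interchanges the order of summation and integration; the interchange is justified by the decay hypothesis $\breve G(it,m)=O\bigl(1/(t^2+m^2+1)^q(|m|+1)\bigr)$ together with the uniform bounds on $\bfJ_{it,\shskip m}(z)$ from Lemmas \ref{lem: bound in t m} and \ref{lem: uniform bound, main}. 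Since $F$ is arbitrary and $G$ is continuous, the resulting distributional identity upgrades to the pointwise formula \eqref{2eq::Bessel transform--Inversion formula}. Your final paragraph on how the hypotheses enter is in the right spirit, but it is attached to a skeleton that does not stand; to repair the proposal you should replace the mode-by-mode reduction with this duality argument from the Parseval identity (or else supply a genuine proof of the two-parameter inversion, which is a much larger undertaking than a normalization bookkeeping exercise).
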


\begin{cor}\label{cor::Bessel transform--Inversion}
	Assume that $G (z)$ is a  smooth function on $\BCO$   such that $G (z)  $ vanishes in a neighborhood of $0$ and  
	$G (z) = O (1/ |z|^{p} )$
	for some $ p > 0$. Assume also that 
	%\begin{align}\label{1eq: condition 2 for G}
$\breve{G} (it , m) = O\bigl( 1/ (t^2+ m^2 + 1)^{q} (|m|+1) \bigr)$  for some $q > 1$.
	%	\end{align} 
	Then the Bessel inversion formula {\rm\eqref{2eq::Bessel transform--Inversion formula}} holds for $G (z)$.
\end{cor}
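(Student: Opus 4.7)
The plan is to deduce Corollary \ref{cor::Bessel transform--Inversion} directly from Theorem \ref{theorem::Bessel transform--Inversion}, whose hypothesis on $\breve{G}$ is already assumed identically. So essentially the task reduces to verifying that the hypotheses on $G$ in the corollary imply those in the theorem, namely that $G$ is continuous and that $G \in L^1(\BCO, \dx z) \cap L^2(\BCO, \dx z)$.

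Continuity of $G$ is immediate from smoothness on $\BCO$. For the integrability, I would split the domain into a neighborhood of $0$ and its complement. Since $G(z)$ vanishes in a neighborhood of $0$ by assumption, there is no contribution near the origin to either integral, and the support of $G$ meets $\{|z| \leq R\}$ in a compact subset of $\BCO$ on which $G$ is bounded; so the integrals of $|G|$ and $|G|^2$ against $\dx z = |z|^{-2} dz$ over $\{|z| \leq R\}$ are finite. Near infinity, I would pass to polar coordinates: with $z = r e^{i\theta}$, the Haar measure reads $\dx z \asymp r^{-1} dr \shskip d\theta$ on $\{|z| \geq R\}$. Using the bound $|G(z)| \leq C/|z|^{p}$, the $L^1$ tail becomes
\begin{align*}
\int_{|z| \geq R} |G(z)| \dx z \leq 2\pi C \int_R^{\infty} r^{-p - 1} dr < \infty
\end{align*}
for any $p > 0$; similarly, the $L^2$ tail is bounded by a constant multiple of $\int_R^{\infty} r^{-2p - 1} dr$, which is again finite for $p > 0$. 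This places $G$ in $L^1(\BCO, \dx z) \cap L^2(\BCO, \dx z)$.

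With $G$ continuous, in $L^1 \cap L^2$, and with $\breve{G}(it, m) = O\bigl(1/(t^2 + m^2 + 1)^q(|m| + 1)\bigr)$ for some $q > 1$ as hypothesized, all the assumptions of Theorem \ref{theorem::Bessel transform--Inversion} are met, and the inversion formula \eqref{2eq::Bessel transform--Inversion formula} holds verbatim. No obstacle is expected here: the corollary is a convenient reformulation packaging the hypotheses in a form that will match the smooth decay properties of the function $G_f(z)$ arising from the orbital integral in \eqref{2eq:definition--Gf}, which is the form in which the inversion theorem will be invoked later in the proof of Theorem \ref{thm: main}.
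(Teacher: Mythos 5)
Your proposal is correct and matches the paper's (implicit) argument: the corollary is stated as an immediate consequence of Theorem \ref{theorem::Bessel transform--Inversion}, and the only content is the routine verification that smoothness, vanishing near $0$, and the decay $G(z)=O(1/|z|^p)$ place $G$ in $L^1(\BCO,\dx z)\cap L^2(\BCO,\dx z)$, which your polar-coordinate computation with $\dx z \asymp r^{-1}\,dr\,d\theta$ does correctly.
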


\begin{rem}
The formula {\rm\eqref{2eq::Bessel transform--Inversion formula}} may be found in {\rm\cite[Theorem 11.1]{B-Mo}} and {\rm\cite[Theorem 12.2.1]{B-Mo2}}, but it is proven only for function $G (z)$ compactly supported on $\BCO$. It is not sufficient for our purpose. They however also proved a Parseval-Plancherel formula for the Bessel transform, and it would enable us to prove {\rm\eqref{2eq::Bessel transform--Inversion formula}} for $G(z)$  as in Theorem {\rm \ref{theorem::Bessel transform--Inversion}}.  %Nevertheless, it is easy to generalize it for the functions $G(z)$ as in Theorem {\rm\ref{theorem::Bessel transform--Inversion}} by approximation. 
This will be done in Appendix {\ref{appendix: Proof of B-M}}. 
\end{rem}

\begin{rem}
	This Bessel inversion formula is the bridge to the ``Kloosterman form" of the Kuznetsov trace formula for $\SL_2 (\BC)$ from its ``spectral form" in {\rm\cite{B-Mo,B-Mo2}}. There is however a direct  approach to the Kloosterman form by a representation theoretic method of Cogdell and Piatetski-Shapiro in {\rm\cite{Qi-Kuz}}. Compare also {\rm\cite{Kuznetsov}} and {\rm\cite{CPS}} for $\SL_2 (\BR)$.
\end{rem}

\begin{rem}
	 In {\rm\cite[Chapter 3]{Qi-Thesis}}, the Bessel inversion formula for $\SL_2 (\BC)$, along with the $\GL_3 (\BC) \times \GL_2 (\BC) $ local functional equations, is applied in a formal way to derive a formula of Bessel functions for $\GL_3 (\BC)$. A similar formula is obtained for $\GL_3 (\BR)$ by the Bessel inversion formula of Kontorovich, Lebedev and Kuznetsov {\rm(}Sears and Titchmarsh{\rm)}.
\end{rem}

One of the main technical results of this work is to show that the function $G_f (z)$ satisfies the conditions in Corollary \ref{cor::Bessel transform--Inversion} for every $f \in C_c^{\infty} (G)$. %; this will be done in \S \ref{sec: Jacquet} and \ref{sec: estimates for Gf}. 
For this, we shall first need the results of Jacquet \cite{Jacquet-RTF} on the asymptotic of  orbital integrals $O_f (z)$ as $|z| \ra \infty$. {In particular, Jacquet proves that the Whittaker coefficient $W (f)$ on the left-hand side of \eqref{1eq: Main Theorem} arises in the leading terms of the asymptotic formula for $O_f (z)$ (if $f$ is even or odd).}

\section{Asymptotic of orbital integrals}\label{sec: Jacquet}

We now recollect Jacquet's results on orbital integrals  \cite[\S 7]{Jacquet-RTF} as follows. Since it is so elegant, we are prompted to include %an outline of 
his proof here. 

\begin{thm}[Jacquet]\label{thm: Jacquet}
	Let $f\in C_c^{\infty}(G)$.  Then there are two compactly supported smooth functions $H_{  +}$ and $H_{ -}$ on $\BC$ such that 
	\begin{align}\label{3eq: Orbit integral--characterization}
		O_{f}(z)=\psi(2z) |z| H_{ +} (1/z)+ \psi(-2z) |z| H_{ -} (1/z),  
	\end{align}
	and
	\begin{align}\label{3eq: Orbit integral--first term}
		H_{+}(0) = W_f(1) / 2,\quad  H_{-}(0) = W_f(-1) / 2.
	\end{align}
\end{thm}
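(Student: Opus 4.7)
The plan is to convert $O_f(z)$ into a two-dimensional oscillatory integral with phase $z(\mu+1/\mu)$, whose critical points $\mu = \pm 1$ have critical values $\pm 2z$, and then to extract the $\psi(\pm 2z)$ factors via a Morse-type substitution combined with an exact Fourier/Fresnel identity. Throughout, $w = 1/z$ will play the role of the small parameter.

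First I would perform the $u$-integration in \eqref{2eq: definition--orbital integral} to write $O_f(z) = \int W_f^\psi(s(z)\varw n(\varv))\psi(\varv)\,d\varv$. The Bruhat identity $\varw n(\varv) = n(-1/\varv) s(1/\varv)\bar{n}(1/\varv)$ (for $\varv \neq 0$, where $\bar{n}(x) := \pmtrix{1}{0}{x}{1}$) together with $W_f^\psi(n(x)g) = \psi(-x)W_f^\psi(g)$ yields $W_f^\psi(s(z)\varw n(\varv)) = \psi(z^2/\varv)\,W_f^\psi(s(z/\varv)\bar n(1/\varv))$. Substituting $\varv = z\mu$ then gives
\begin{equation*}
O_f(z) = |z|^2 \int_\BC G(\mu, 1/z)\,\psi(z(\mu + 1/\mu))\,d\mu,
\end{equation*}
where $G(\mu, w) := W_f^\psi(s(1/\mu)\bar n(w/\mu))$ is smooth on $\BCx \times \BC$, compactly supported in $\mu$, and vanishes for $|w|$ exceeding some $M_f > 0$ determined by $\operatorname{supp}(f)$.

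Next, using the exact identities $\mu + 1/\mu = \pm 2 + (\mu \mp 1)^2/\mu$ and a smooth partition of unity $1 = \chi_+ + \chi_- + \chi_0$, where $\chi_\pm$ is supported in a small disk around $\mu = \pm 1$ on which the Morse substitution $\sigma = (\mu - 1)/\sqrt{\mu}$ (resp.~$\eta = (\mu + 1)/\sqrt{-\mu}$) is a biholomorphism with $\sigma^2 = (\mu-1)^2/\mu$ (resp.~$-\eta^2 = -(\mu+1)^2/\mu$), and $\chi_0$ is supported away from $\pm 1$, I would split
\begin{equation*}
O_f(z) = \psi(2z)\,|z|^2\!\int\!\widetilde G_+(\sigma,1/z)\psi(z\sigma^2)d\sigma + \psi(-2z)\,|z|^2\!\int\!\widetilde G_-(\eta,1/z)\psi(-z\eta^2)d\eta + E(z),
\end{equation*}
with $\widetilde G_\pm \in C_c^\infty(\BC)$ smooth in the second argument, and $E(z)$ the $\chi_0$-contribution. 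For each Fresnel integral, the 2D identity $\int \psi(z\sigma^2)d\sigma = 1/(2|z|)$ (with the ``twice Lebesgue'' convention) combined with completion of the square yields $\mathcal F_\sigma[\psi(z\sigma^2)](\xi) = (2|z|)^{-1}\psi(-\xi^2/(4z))$, whence
\begin{equation*}
|z|^2\!\int\!\widetilde G_+(\sigma, 1/z)\psi(z\sigma^2)d\sigma = |z|\,H_+^{\mathrm{loc}}(1/z), \qquad H_+^{\mathrm{loc}}(w) := \tfrac{1}{2}\!\int\!\widehat{\widetilde G}_+(\xi, w)\,\psi(-\xi^2 w/4)\,d\xi.
\end{equation*}
Since $\widehat{\widetilde G}_+(\cdot, w)$ is Schwartz in $\xi$, differentiation under the integral shows $H_+^{\mathrm{loc}} \in C^\infty(\BC)$, and Fourier inversion evaluated at $\sigma = 0$ gives $H_+^{\mathrm{loc}}(0) = \tfrac{1}{2}\widetilde G_+(0,0) = \tfrac{1}{2} G(1, 0) = \tfrac{1}{2} W_f^\psi(s(1)) = W_f(1)/2$; an analogous construction yields $H_-^{\mathrm{loc}}$ with $H_-^{\mathrm{loc}}(0) = W_f(-1)/2$.

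The main obstacle is disposing of the remainder $E(z)$. Iterated integration by parts, using $\partial_\mu \psi(z(\mu+1/\mu)) = 2\pi i z(1-1/\mu^2)\psi(\cdot)$ and the fact that $1 - 1/\mu^2$ is bounded away from zero on $\operatorname{supp}(\chi_0)$, shows $|E(z)| = O_N(|z|^{-N})$ for every $N$, together with analogous bounds for every derivative. Setting $E_+(w) := |w|\psi(-2/w) E(1/w)$ (extended by $0$ at $w = 0$), the rapid decay of $E$ and its derivatives as $|z| \to \infty$ dominates the wild oscillation of $\psi(-2/w)$ near $w = 0$ and forces $E_+ \in C^\infty(\BC)$, flat to all orders at the origin and with compact support (since $E$ vanishes for $|z|$ small, $1/z$ being outside $\operatorname{supp}(f)$). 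Tautologically $E(z) = \psi(2z)|z|E_+(1/z)$, so putting $H_+ := H_+^{\mathrm{loc}} + E_+$ and $H_- := H_-^{\mathrm{loc}}$ completes the decomposition. The subtle point throughout is to establish \emph{honest} $C^\infty$-smoothness of $H_\pm$ at $1/z = 0$ rather than a mere asymptotic expansion; the Fresnel/Fourier formula accomplishes this for the main terms, while the absorption argument extends it to the exponentially small error.
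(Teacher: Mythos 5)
Your proposal is correct and takes essentially the same route as the paper: Jacquet's proof likewise reduces $O_f(z)$ to the oscillatory integral $|z|^2\int T\big((uz)^{-1},u\big)\psi\big(z(u+u^{-1})\big)\,du$ (obtained via the cell decomposition $G=NA\varw N\cup N^{-}AN$ rather than your Bruhat identity for $\varw\shskip n(\varv)$), applies the same partition of unity at $u=\pm1$, the same Morse substitution $\tw=u^{1/2}\mp u^{-1/2}$, and the same exact Weil/Fresnel identity to produce $H_{\pm}(z)=\tfrac12\int\widehat\Phi_{\pm}(z,\tw)\psi(-\tfrac14\tw^2z)\,d\tw$ with $2H_{\pm}(0)=\Phi_{\pm}(0,0)=W_f(\pm1)$. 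The only point worth tightening is your claim that $G(\mu,w)$ is ``compactly supported in $\mu$'': what you actually need (and what does hold, since the bottom row $(w,\mu)$ of $s(1/\mu)\bar n(w/\mu)$ must lie in a fixed compact subset of $\BC^2\smallsetminus\{0\}$) is that for $|w|$ small the support in $\mu$ is bounded away from $0$, so that the integration by parts for $E(z)$ only ever sees $\mu$ in a compact subset of $\BCx$ — the paper's reduction to the cell $N^{-}AN$ builds this in from the start via the compact support of $T$ on $\BC\times\BCx$.
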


\begin{rem}
There is an alternative proof of the theorem by Baruch and Mao in {\rm\cite{BaruchMao-Whittaker}} for $G = \SL_2 (\BR)$, directly using the method of stationary phase.  It would however be quite technical to generalize their arguments to $\BC$. Jacquet's original proof applies the Parseval-Plancherel formula of Weil as a substitute for stationary phase. It works not only for $\BR$ and $\BC$ but also for non-Archimedean local fields. 
\end{rem}

\begin{proof}[Proof of Theorem \ref{thm: Jacquet}]
		Let 
		\begin{align*}
		N^- =\left\{n^{\scriptscriptstyle -}(u)=\begin{pmatrix}
		1 &  \\
		u & 1
		\end{pmatrix}  :\, u\in \BC\right\}.
		\end{align*}
		We have 
		$$G  = N A \varw N \cup  N^{-} A N. $$
		Thus we may assume that the support of $f$ is contained in one of these cells.
		Recall the definitions of  $W_f (g)$ and $O_f (z)$ in \eqref{1eq: defn of Wf(g)} and \eqref{2eq: definition--orbital integral}. If $f$ is supported on $NA\varw N$, then  $W_f (1) = W_f (-1) = 0$ and $O_f(z)$ is  compactly supported on $\BCx$. Now assume that the support of $f$ is contained in ${N}^- AN$. Then
		the function $T $ defined by	
		\begin{align*}
		T(u, z)=\int f \big(  n^{\scriptscriptstyle -} (u)  s (z) n (\varv ) \big) \psi(\varv) d \varv
		\end{align*}
		is  smooth and compactly supported on the product $\BC \times \BCx$. It is clear that
		\begin{align*}
		T (0, 1) = W_f (1), \hskip 10 pt 	T (0, - 1) = W_f (-1).
		\end{align*}
		
		Note that for $u \in \BCx$	we have 
		\begin{align*}
		\pmtrix{1}{u}{}{1} \pmtrix{z}{}{}{z^{-1}} \varw \pmtrix{1}{\varv}{}{1}       =  \pmtrix{1}{}{u^{-1}}{1}  \pmtrix{z^{-1}u}{}{}{z u^{-1}}  \pmtrix{1}{\varv-z^2 u^{-1}}{}{1}   .
		\end{align*}
		After suitable changes of variables,
		 $O_f (z)$ may be reformulated  as follows
		\begin{align*}
		O_f(z)= | z |^2 \int T \big((uz)^{-1} , u\big) \psi \big(z \big( u + u^{-1} \big) \big) d u.
		\end{align*}
Since the phase function $u + u^{-1}$ has two stationary points $1$ and $-1$, we may introduce a suitable partition of unity and write
		\begin{align*}
		O_f=K_0 + K_{+} + K_{-}
		\end{align*}
		so that $K_0  $ is a Schwartz function on $\BCx$ and that $K_{\pm} $ is of the form
		\begin{align*}
		K_{\pm}(z) = |z|^2 \int  T_{\pm} \big((uz)^{-1} , u\big) \psi \big(z \big( u + u^{-1} \big) \big) d u;
		\end{align*}
	the function $T_{\pm}$ is smooth of compact support on $\BC \times \BCx$; furthermore,	the projection of the support of $T_{\pm}$ to the second factor is contained in a small neighborhood of $\pm 1$ and 
	\begin{align*}
	T_{\pm} (0, \pm 1) = T (0, \pm 1) = W_f (\pm 1).
	\end{align*} In the integral for $K_{\pm}$, we first introduce a new variable given by
		\begin{align*}
		u + u^{-1} =   \tw^2 \pm 2,\quad  \tw = u^{1/2} \mp u^{-1/2},
		\end{align*}
		where the square root is chosen to be the principal branch.  We find then
		\begin{align*}
		K_{\pm}(z) = \psi(\pm 2z ) |z|^2  \int \Phi_{\pm} (z^{-1} ,\tw) \psi (z \tw^2 ) d \tw,
		\end{align*}
		with
		\begin{align*}
		\Phi_{\pm}(z , \tw) = T_{\pm} \big( z u^{-1} , u\big)  \left|  {d \tw}/{ d u}  \right| ^{-2}.
		\end{align*}
		At this point, we invoke the Parseval-Plancherel identity of Weil,
		\begin{align}\label{2eq: Weil}
		\int \widehat\Phi(u) \psi \big( \tfrac{1}{2} z u^2 \big)d u = |z|^{-1} \int \Phi(u) \psi
		\big(-\tfrac{1}{2} z^{-1} u^2 \big) d u,
		\end{align}
		in which $\Phi$ is a Schwartz function on $\BC$ and $\widehat\Phi$ is its Fourier transform 
		\begin{align*}
		\widehat\Phi (u) = \int \Phi (\varv) \psi (u \varv) d \varv.
		\end{align*}		
		As a consequence of (\ref{2eq: Weil}), we have 
		\begin{align*}
		K_{\pm}(z) =  \psi ( \pm 2z )  |z| H_{\pm}(1/z),
		\end{align*}
		where we have set
		\begin{align*}
		& H_{\pm} ( z ) =  \tfrac 1 2 \hskip -1 pt \int \widehat \Phi_{\pm}(  z ,   \tw ) \psi \big( - \tfrac 1 4 \tw^2 z \big) d \tw, \\
		&\widehat\Phi_{\pm} (z,\tw) = \int \Phi_{\pm} (z , \varv) \psi (\varv \tw) d \varv.
		\end{align*}
		Now $H_{\pm}$ is certainly smooth near $0$ and 
		\begin{align*}
	2	H_{\pm}( 0 ) =   \int \widehat \Phi_{\pm} ( 0,  \tw ) d \tw =  \Phi_{\pm} ( 0, 0)  =   T_{\pm} ( 0, \pm 1)   =  W_f(\pm 1)  .
		\end{align*}
\end{proof}

\subsection{Proof of Theorem \ref{thm: main}}\label{sec: proof of main theorem}

%Granted that Theorem \ref{thm: analytic} holds, we are now ready to prove Theorem \ref{thm: main}. Still one assumption is needed in the course of the proof. We leave its justification to \S \ref{sec: change of order}. 

We are now ready to prove Theorem \ref{thm: main}. We shall leave two questions to \S \ref{sec: estimates for Gf}. The first point is that our function $G_f (z)$ that was defined in \eqref{2eq:definition--Gf} satisfies the
conditions of Corollary \ref{cor::Bessel transform--Inversion} so that we can apply the Bessel inversion formula for $G_f (z)$.
The second is a change of order of a limit, summation and integration that will be pointed out in the course of the proof.

Let $f\in  C_c^{\infty}(G)$. We can write $f$ as the sum of an even and an odd function. So, by linearity, we may well assume that $f$ is itself even or odd. As the arguments for the two cases are similar, we shall only prove the even case, that is, Corollary \ref{thm: even}.

Now let $f$ be even. Then $O_f(z)$ and hence $G_f(z)$ are even functions on $\BCO$. Observe that
\begin{align*}
W_f (1) =W_f (-1) = W (f).
\end{align*}
It follows from \eqref{2eq:definition--Gf}, \eqref{3eq: Orbit integral--characterization} and \eqref{3eq: Orbit integral--first term} that
\begin{align*}%\label{3eq: Wf=lim Gf}
W(f)=\lim_{z\ra \infty}\frac{ 4 \pi |z| G_f(z)}{ \cos (z+\bar z) },
\end{align*}
where the limit is taken on a set of $z$ such that the denominator $\cos (z+\bar z)$ is bounded away from $0$. %(later in \S \ref{sec: estimates for Gf} we shall actually restrict the limit on the positive real line). 
%We now make our {\it first} assumption here that
By \eqref{2eq::Bessel transform--Inversion formula}, we have
\begin{align*}
G_f(z) = \frac{1}{8} \sum_{d=-\infty}^{\infty}\int_{-\infty}^{\infty} \bfJ_{it, \shskip 2d} (z)\breve{G}_f (it, 2d) \big( t^2 + d^2\big) dt. 
\end{align*}
Hence %$W(f)$ is equal to
\begin{align}\label{3eq: lim sum and int}
W(f) = \lim_{z\ra \infty} \frac{   \pi |z|}{ 2\cos (z+\bar{z}) }  \sum_{d=-\infty}^{\infty}\int_{-\infty}^{\infty} \bfJ_{it, \shskip 2d} (z)\breve{G}_f (it, 2d) \big( t^2 + d^2 \big) dt. 
\end{align}
Assume now that it is legitimate to interchange the order of the limit, summation and integration in \eqref{3eq: lim sum and int}. This will be justified using the dominated
convergence theorem in \S   \ref{sec: change of order}. We then have
\begin{align*}
W(f)=\sum_{d=-\infty}^{\infty}\int_{-\infty}^{\infty}\lim_{z\ra \infty} \frac{   \pi |z|\bfJ_{it, \shskip 2d} (z)}{2 \cos (z+\bar{z}) } \breve{G}_f (it, 2d) \big( t^2 + d^2 \big) dt . 
\end{align*}
It follows from the asymptotic formula for $\bfJ_{it, \shskip 2 d}(z)$ (see \eqref{appendix: J asymptotic}) that
\begin{align*}
\lim_{z\ra \infty}\frac{   \pi |z| \bfJ_{it, \shskip 2d}(z)}{2 \cos (z+\bar{z}) }= 1 . 
\end{align*}
Consequently,
\begin{align*}
W(f)=   \sum_{d=-\infty}^{\infty}\int_{-\infty}^{\infty}  \breve{G}_f (it, 2d) \big( t^2 + d^2 \big) dt .
\end{align*}	
In view of \eqref{2eq:Bessel distribution=BM of Orbit} and \eqref{2eq:Bessel transform--definition}, we conclude that
\begin{align*}
W(f)= \frac{1}{32 \pi^4}\sum_{d=-\infty}^{\infty}\int_{-\infty}^{\infty} J_{\pi_{it,\shskip 2d} }(f) \big( t^2 + d^2 \big) dt .
\end{align*}

\section{\texorpdfstring{Estimates for the Bessel function $\bfJ_{it, \shskip m} (z)$}{Estimates for the Bessel function $J_{it, \shskip m} (z)$}}\label{sec: estinates for J}

\subsection{\texorpdfstring{Preliminaries on the Bessel function $\bfJ_{\mu, \shskip m} (z)$}{Preliminaries on the Bessel function $J_{\mu, \shskip m} (z)$}}

In this section, we recollect some results on the Bessel function $\bfJ_{\mu, \shskip m} (z)$ for $\mu \in \BC$ and $m \in \BZ$. We do not feel it is necessary to restrict ourselves to Bessel functions  $\bfJ_{i t, \shskip m} (z)$ of pure imaginary order.

Let $J_{\nu} (z)$ be the Bessel function of the first kind of order $\nu$. Let $H_{\nu}^{(1)} (z)$ and $H_{\nu}^{(2)} (z)$ be the Hankel functions of order $\nu$. They all satisfy the Bessel differential equation
\begin{align}\label{appendix: Bessel DE}
z^2 \frac {d^2 \tw} {d z^2} + z \frac {d \tw} {d z} + \big(z^2 - \nu^2 \big) w = 0.
\end{align}

\subsubsection{}
We have the definition
\begin{equation}\label{4eq: defn of JJ mu m}
	\bfJ_{\mu ,\shskip m}(z)= \left\{\begin{aligned}
& \dfrac{ 1}{ \sin( \pi \mu )}\big(J_{\mu, \shskip m}(  z)-J_{- \mu,-m}( z) \big),\ && \text{ if $m$ is even,} \\
& \dfrac{ 1}{i\cos( \pi \mu )} \big(J_{\mu,\shskip m}(  z)+J_{- \mu,-m}( z) \big),\ & &\text{ if $m$ is odd.}
	\end{aligned}\right. 
\end{equation} 
with 
\begin{equation}\label{4eq: defn of J mu m}
J_{\mu,\shskip m}(z)=J_{-  \mu - \frac{1}{2} m}(z)J_{-  \mu  + \frac{1}{2} m} (\bar{z}).
\end{equation}

%When  $\nu \neq - 1, -2, -3, ...$, w
It is readily seen from the series expansion of $J_{\nu} (z)$ (see \eqref{1eq: series expansion of J}) that 
\begin{align*}
	J_{\nu} (z)  \Lt_{\nu}  |z^{\nu}|, \hskip 10pt |z| \leqslant 1. 
\end{align*}
It follows that if $\mu$ is generic, that is $2 \mu \notin 2 \BZ + m$, then
\begin{align}\label{2eq: bound for J mu m}
\bfJ_{ \mu, \shskip  m} (z)   \Lt_{\shskip \mu,\shskip  m} \left| |z|^{- 2 \mu} \right| + \left| |z|^{ 2 \mu} \right|, \hskip 10 pt |z| \leqslant 1.
\end{align}
In particular, 
\begin{align}\label{appendix: J < 1}
	\bfJ_{it, \shskip m} (z) \Lt_{\shskip t,\shskip m} 1,\hskip 10pt  |z|\leqslant 1   ,
\end{align}
except for the non-generic case when $t=0$ and $m$ even. In the non-generic case, we have a slightly worse estimate (see \cite[(2.24)]{Qi-II-G})
\begin{align}
	\bfJ_{0, \shskip m} (z) \Lt_{\shskip  m} \log (2/|z|) ,\hskip 10pt  |z|\leqslant 1, \hskip 10 pt \text{($m$ even)}.
\end{align}

We have a second expression of $\bfJ_{\mu, \shskip m}(z)$ in terms of Hankel functions. Define
\begin{align*}
	H_{\mu,\shskip m}^{(1,\shskip 2)}(z) = H^{(1,\shskip 2)}_{ \mu + \frac{1}{2} m}(z) H^{(1,\shskip 2)}_{ \mu-\frac{1}{2}m}\left(\bar{z}\right).
\end{align*} 
By the connection formulae \cite[3.6 (1),(2)]{Watson}, we have
\begin{align*}
	\bfJ_{\mu, \shskip m}(z)=\frac{i}{\hskip 1 pt 2 \hskip 0.5 pt}\left( (-)^m e^{ \pi i\shskip  \mu }H^{(1)}_{\mu,\shskip m}(z) - e^{ - \pi i \shskip \mu}H^{(2)}_{\mu,\shskip m}(z)\right).
\end{align*}
From the asymptotic formula of Hankel functions \cite[7.2 (5),(6)]{Watson}, we deduce that
\begin{align}\label{appendix: J asymptotic}
	\bfJ_{\mu, \shskip m}(z) \sim {\frac{1}{\pi  |z|}} \lp (-)^m  e \left(   ( z + \bar{z} ) / 2 \pi \right) + e \left( - ( z + \bar{z} ) / 2 \pi \right) \rp, \hskip 10 pt |z| \ra \infty.
\end{align}
It is then clear that
\begin{align}\label{4eq: J(z)=O(1/|z|)}
	  \bfJ_{\mu, \shskip m} (z)   = O  (1 / |z|) , \hskip 15 pt |z| \ra \infty.
\end{align}
Moreover, it follows from \cite[\S 7.13.1]{Olver} that
\begin{align}\label{4eq: J(z) <1/z}
\bfJ_{\mu, \shskip m}(z) \Lt 1 /|z|, \hskip 15 pt |z| > |\mu|^2 + m^2 + 1,
\end{align}
where the implied constant is absolute.

\subsubsection{Recurrence formulae}
Recall the following recurrence formula \cite[3.2 (2)]{Watson},
\begin{align}\label{4eq: recurrence Jv} 
2 J'_{\nu}(z)  =  J_{\nu - 1} (z) - J_{\nu + 1} (z) . 
\end{align}
By \eqref{4eq: recurrence Jv}, it is straightforward to derive the corresponding recurrence formulae
\begin{equation}\label{4eq: recurrence partial} 
		\begin{split}
	&	2 i \partial \bfJ_{\mu, \shskip m} (z) / \partial z  = \bfJ_{\mu + \frac{1}{2}, \, m + 1} (z) + \bfJ_{\mu - \frac{1}{2}, \, m - 1} (z) , \\
		&	2 i \partial \bfJ_{\mu, \shskip m} (z) / \partial \bar{z}  = \bfJ_{\mu + \frac{1}{2}, \, m - 1} (z) + \bfJ_{\mu - \frac{1}{2}, \, m + 1} (z) . 
		\end{split}
\end{equation}
%\begin{align}\label{4eq: recurrence division by z}
%\begin{cases}
%\frac{ - 4 \mu - m}{ z } \bfJ_{\mu, \shskip m} (z) =  -i \shskip \bfJ_{\mu + \frac{1}{4}, \shskip m + 1} (z)   +   i \shskip \bfJ_{\mu - \frac{1}{4}, \shskip m - 1} (z)  \\[10pt]
%\frac{ - 4 \mu + m}{ \bar{z} } \bfJ_{\mu, \shskip m} (z) =  -i \shskip  \bfJ_{\mu + \frac{1}{4}, \shskip m - 1} (z)   +   i \shskip \bfJ_{\mu - \frac{1}{4}, \shskip m + 1} (z)
%\end{cases}
%\end{align}
It follows from \eqref{4eq: J(z)=O(1/|z|)} and \eqref{4eq: recurrence partial} that
\begin{align}\label{4eq: partial J}
\partial \bfJ_{\mu, \shskip m} (z) / \partial   z, \ \partial \bfJ_{\mu, \shskip m} (z)  / \partial \bar z = O  (1 / |z|) , \hskip 15 pt |z| \ra \infty.
\end{align}

\subsubsection{Differential equations}

Define
\begin{align}\label{4eq: nabula}
	\nabla=z^2\frac{\partial^2}{\partial z^2}+z\frac{\partial}{\partial z}+z^2 =\lp z\frac{\partial}{\partial z} \rp^2+z^2.
\end{align} 
and its conjugation
\begin{align}
	\overline{\nabla}=\bar{z}^2\frac{\partial^2}{\partial \bar{z}^2}+\bar{z}\frac{\partial}{\partial \bar{z}}+\bar{z}^2 =\lp \bar{z}\frac{\partial}{\partial \bar{z}} \rp^2+\bar{z}^2.
\end{align}
It is clear from \eqref{appendix: Bessel DE}-\eqref{4eq: defn of J mu m} that $\bfJ_{\mu,\shskip m}(z)$ is an eigenfunction  of both $\nabla$ and $\overline{\nabla}$:
\begin{align}
\label{4eq: nabla J}	\nabla \bfJ_{\mu, \shskip m}(z)& = (  \mu +  {m} / {2} )^2  \bfJ_{\mu, \shskip m}(z),\\
	\overline{\nabla} \bfJ_{\mu, \shskip m}(z)& = (  \mu -  {m}/{2})^2 \bfJ_{\mu, \shskip m}(z).
\end{align}

\subsubsection{An integral representation}

In the polar coordinates, we have the following integral representation of $\bfJ_{\mu, \shskip m}(x e^{i \phi})$ (see \cite[Corollary 6.17]{Qi-Bessel} or \cite[Theorem 12.1]{B-Mo}),
\begin{align}\label{appendix: Integral representation}
	\bfJ_{\mu, \shskip m}(x e^{ i \phi }) = \frac{ 2 (-i)^m}{ \pi} \int_0^{\infty} y^{ 2 \mu - 1}  E ( y e^{ i \phi})^{ - m} J_{m} \left( x Y (y e^{ i \phi}) \right) dy,
\end{align}
with
\begin{align}\label{appendix: Y and E}
	& Y (z) = \left| z + z^{-1}  \right|, %= \lp \lp y + y^{-1} \rp \cos \lp \tfrac 1 2 \phi \rp + \lp y - y^{-1} \rp \sin \lp \tfrac 1 2 \phi \rp \rp^{\frac 1 2}, \\
	\hskip 10 pt E (z) = \lp z + z^{-1} \rp /\left| z + z^{-1}  \right|.
\end{align} The integral in \eqref{appendix: Integral representation} is absolutely convergent only when $|\Re \mu| < \text{\large$\frac 1 4$}$.

%We shall also need the following bound for Bessel functions of purely imaginary order. %For all real $r$ and positive $x$,
%\begin{align}
% J_{\pm 2 i t} (x)  \Lt e^{ \pi t} / \sqrt {x}, \hskip 15 pt x, \, t > 0.
%\end{align}
%with absolute implied constant. 
%In fact, \red{reference}

\subsection{Uniform bounds for  the Bessel function $\bfJ_{it, \shskip m} (z)$}

The following uniform bound for $|z| 	\bfJ_{it, \shskip m} (z)$ will be needed in \S \ref{sec: estimates for Gf} to conclude the proof of Theorem \ref{thm: main}. 
 
\begin{lem}\label{lem: bound 1/z}  We have uniformly
	\begin{align}\label{4eq: bound 1/z}
	|z| \bfJ_{it, \shskip m} (z) \Lt   t^2 + m^2 + 1 .
	\end{align}
\end{lem}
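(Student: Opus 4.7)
The plan is to split the argument by the size of $|z|$ relative to $t^2+m^2+1$.  For $|z| \geq t^2+m^2+1$, the bound is immediate from \eqref{4eq: J(z) <1/z}, which gives $|\bfJ_{it,m}(z)| \ll 1/|z|$ and hence $|z|\bfJ_{it,m}(z) \ll 1 \leq t^2+m^2+1$.

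For $|z| < t^2+m^2+1$, the strategy is to establish the parameter-free uniform estimate
\begin{align*}
|\bfJ_{it,m}(z)| \ll |z|^{-1/3},
\end{align*}
for then $|z|\bfJ_{it,m}(z) \ll |z|^{2/3} \leq (t^2+m^2+1)^{2/3} \leq t^2+m^2+1$, since $t^2+m^2+1 \geq 1$.  To prove this uniform estimate, we start from the integral representation \eqref{appendix: Integral representation}, which is valid for $\mu = it$ since $\Re(it) = 0 < 1/4$.  The substitution $y = e^s$ rewrites it as
\begin{align*}
\bfJ_{it,m}(xe^{i\phi}) = \frac{2(-i)^m}{\pi} \int_{-\infty}^{\infty} e^{2its} E(e^{s+i\phi})^{-m} J_m \bigl(xY(s, \phi)\bigr) ds,
\end{align*}
with $Y(s,\phi) = 2|\cosh(s+i\phi)|$ and $|E(\cdot)| = 1$.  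Taking absolute values removes the $t$-dependence entirely.  Appealing to a uniform Landau-type estimate for classical Bessel functions of integer order, $|J_m(u)| \ll u^{-1/3}$ for $u > 0$ and $m \in \BZ$, it suffices to bound
\begin{align*}
\int_{-\infty}^{\infty} Y(s, \phi)^{-1/3} ds \ll 1
\end{align*}
uniformly in $\phi \in \BR$.  This is routine: $Y(s,\phi) \asymp e^{|s|}$ as $|s| \to \infty$, and the only possible singularity, which occurs at $s=0$ when $\phi = \pm \pi/2$, is of integrable order $|s|^{-1/3}$.

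The main obstacle is the uniform Landau-type bound for $J_m$, which is the analytic input we are relying upon.  A more self-contained alternative would be to integrate by parts twice in $s$ inside the representation above: the phase $e^{2its}$ then yields a prefactor $1/(2it)^2$, while the resulting derivatives of $J_m\bigl(xY(s,\phi)\bigr)$ can be controlled by the recurrence formulas \eqref{4eq: recurrence partial} (or the Bessel differential equation satisfied by $J_m$), which introduce at most polynomial factors in $m$.  Combined with the trivial estimate effective for small $t$, this would produce the claimed bound $\ll t^2+m^2+1$ by interpolation.
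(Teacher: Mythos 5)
Your argument is correct, and for the range $|z| < t^2+m^2+1$ it takes a genuinely different route from the paper's (the large-$|z|$ half, via \eqref{4eq: J(z) <1/z}, is identical). The paper starts from the same integral representation \eqref{appendix: Integral representation} but inserts the Harcos--Michel bound $\sqrt{x}\,J_m(x) \Lt |m|+1$, which after the reduction $Y(ye^{i\phi}) \geqslant |y-1/y|$ gives $\bfJ_{it,\shskip m}(z) \Lt (|m|+1)/\sqrt{|z|}$ and hence the claim when $|z| \leqslant t^2+m^2+1$. You instead invoke Landau's uniform bound $|J_m(u)| \Lt u^{-1/3}$ to get the parameter-free estimate $\bfJ_{it,\shskip m}(z) \Lt |z|^{-1/3}$; this is precisely the paper's Lemma \ref{lem: uniform bound, main} with $r=0$, and, as the paper itself notes right after the proof, it yields the stronger conclusion $|z|\,\bfJ_{it,\shskip m}(z) \Lt (t^2+m^2+1)^{2/3}$ (equation \eqref{4eq: bound 1/z, improved}), of which the stated bound is a weakening. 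Both proofs rest on one citable uniform estimate for classical Bessel functions of integer order, so neither is more elementary; yours buys a better exponent, while the paper's version keeps the $m$-dependence explicit (which is what it actually reuses elsewhere). Two minor points: your claim $\int_{-\infty}^{\infty} Y(s,\phi)^{-1/3}\,ds \Lt 1$ should be justified uniformly in $\phi$ via $2|\cosh(s+i\phi)| \geqslant 2|\sinh s|$ --- the same inequality $Y(ye^{i\phi}) \geqslant |y-1/y|$ used in the paper --- rather than only by locating the worst singularity at $\phi=\pm\pi/2$; and the ``self-contained alternative'' in your final paragraph is unnecessary and sketchier (integrating by parts in $s$ differentiates $J_m\bigl(xY(s,\phi)\bigr)$ and brings down factors of $x$, so the asserted polynomial control in $m$ alone is not immediate). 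The first two paragraphs already constitute a complete proof.
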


\begin{proof}
It follows from \cite[Proposition 8]{Harcos-Michel} that 
\begin{align}\label{4eq: bound for Jm (x)}
 \sqrt {x} J_{m} (x) \Lt |m| + 1, \quad x > 0.
\end{align} 
By the integral representation (\ref{appendix: Integral representation}, \ref{appendix: Y and E}), we have
\begin{align*}
\left| \bfJ_{it, \shskip m} (x e^{i \phi}) \right| \leqslant \frac{2}{ \pi} \int_0^{\infty}  \left| J_m \big( x Y (y e^{ i \phi}) \big) \right| \frac {dy} y,
\end{align*}
with $Y (y e^{ i \phi})$ defined as in \eqref{appendix: Y and E}. 	Thus \eqref{4eq: bound for Jm (x)} yields 
\begin{align*}
 \bfJ_{it, \shskip m} (x e^{i \phi}) \Lt \frac {|m|+1} {\sqrt{x}} \int_0^{\infty} \frac { d y} {y \sqrt {Y (y e^{i \phi})}  }. 
\end{align*}
Note that we always have
\begin{align*}
Y (y e^{i \phi})   \geqslant | y - 1/ y | ,
\end{align*}
and that the integral
\begin{align*}
\int_0^{\infty} \frac { d y} {y |y-1/y|^{1/2}} 
\end{align*}
is bounded. Hence 
\begin{align}\label{4eq: bound for J, 1}
\bfJ_{it, \shskip m} (x e^{i \phi}) \Lt \frac {|m|+1} {\sqrt{x}} . 
\end{align}

When $|z| \leqslant  t^2 + m^2 +1 $, it follows from \eqref{4eq: bound for J, 1} that
\begin{align*}\label{4eq: bound transition z<}
\bfJ_{it, \shskip m} (z)  & \Lt  \frac {|m|+1} {  \sqrt{|z|}  }  \leqslant  \frac {    t^2 + m^2  + 1  } {  |z|  }.
\end{align*}  
When $|z| >   t^2 + m^2 + 1$, the bound $\bfJ_{it, \shskip m} (z)  \Lt 1 / |z|$ in \eqref{4eq: J(z) <1/z} is more than sufficient. 
\end{proof}

Note that if we use \eqref{4eq: uniform bound, main} with $r = 0$ in Lemma \ref{lem: uniform bound, main} below, the uniform bound \eqref{4eq: bound 1/z} may be improved into 
\begin{equation}\label{4eq: bound 1/z, improved}
|z| \bfJ_{it, \shskip m} (z) \Lt   (t^2 + m^2 + 1)^{2/3}  .
\end{equation}

\vskip 5 pt

The following two lemmas will only be used in Appendix \ref{appendix: Proof of B-M}. So the reader is advised to directly jump to \S \ref{sec: estimates for Gf}.
In these lemmas, the estimates are focused on the  aspect of $t$ and $m$ in order to optimize the condition on $ 	\breve{G} (it , m) $ in Theorem \ref{theorem::Bessel transform--Inversion} or Corollary \ref{cor::Bessel transform--Inversion}. In view of Theorem \ref{thm: analytic}, the reader may find that the optimized condition is far from necessary. 

\begin{lem}\label{lem: bound in t m}
	We have
	\begin{equation}\label{4eq: bound in t m}
	\bfJ_{it, \shskip m} (z) \Lt \left\{\begin{aligned}
& \ds \frac{ \exp \big(  {|z|^2} / {4} \big) /|t| + 1    }{  \sqrt{  t^2 + m^2 }} ,\ & &\text{ if $m$ is even,} \\
& \ds \frac{ \exp \big(  {|z|^2} / {2} \big)}{   \sqrt{  t^2 + m^2 }},\ & & \text{ if $m$ is odd.}
	\end{aligned}\right.  
	\end{equation}
\end{lem}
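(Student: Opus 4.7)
Working directly from the series $J_\nu(z) = \sum_{n \geqslant 0} (-)^n (z/2)^{\nu+2n}/(n!\,\Gamma(\nu+n+1))$, the plan is to unfold $J_{it, m}(z) = J_{-it-m/2}(z)\,J_{-it+m/2}(\bar z)$ as a double series. Since a short computation shows that the prefactor $(z/2)^{-it-m/2}(\bar z/2)^{-it+m/2}$ has modulus $1$ for every $z \in \BCx$, the triangle inequality gives
\[
|J_{it, m}(z)| \leqslant \sum_{n_1, n_2 \geqslant 0} \frac{(|z|^2/4)^{n_1+n_2}}{n_1!\,n_2!\,|\Gamma(n_1+1-m/2-it)|\,|\Gamma(n_2+1+m/2-it)|}.
\]
The entire proof then reduces to bounding this double sum from above, after division by $|\sinh\pi t|$ or $|\cosh\pi t|$ according to the parity of $m$.

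The key technical input is the pair of closed-form identities, derived from the reflection formula $\Gamma(s)\Gamma(1-s)=\pi/\sin\pi s$ together with the recurrence $\Gamma(s+1)=s\Gamma(s)$:
\[
|\Gamma(k+1+it)|^2 = \frac{\pi \prod_{j=0}^{k}(j^2+t^2)}{|t|\sinh\pi|t|}, \qquad
|\Gamma(k+\tfrac 1 2 + it)|^2 = \frac{\pi \prod_{j=0}^{k-1}((j+\tfrac 1 2)^2+t^2)}{\cosh\pi t}
\]
for integer $k \geqslant 0$, with analogous reciprocal identities when the argument has nonpositive real part. These reveal that the product $|\Gamma_1|\,|\Gamma_2|$ always contains exactly one factor of $\sinh\pi|t|$ for even $m$ (resp.\ $\cosh\pi t$ for odd $m$), precisely matching the denominator of $\bfJ_{it, m}$, so that the $\sinh$ (or $\cosh$) cancellation is automatic. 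Isolating the central Gamma term $|\Gamma(1+m/2\pm it)|$ or $|\Gamma(1/2+m/2\pm it)|$ in the lower bound yields the factor $\sqrt{(m/2)^2+t^2}\asymp\sqrt{t^2+m^2}$, which is responsible for the $1/\sqrt{t^2+m^2}$ in the conclusion.

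For odd $m$, once the $\cosh\pi t$ is cancelled, the remaining double sum factors into the product of two one-dimensional series, each bounded termwise by $(|z|^2/4)^n/n! \to e^{|z|^2/4}$, yielding the $e^{|z|^2/2}/\sqrt{t^2+m^2}$ bound. For even $m$, the additional subtlety is that $1/\sinh\pi t$ has a pole at $t=0$ that must be compensated by the cancellation in $J_{it, m}(z) - J_{-it, -m}(z)$. The identity $\overline{J_\nu(z)}=J_{\bar\nu}(\bar z)$ implies $J_{-it, -m}(z)=\overline{J_{it, m}(z)}$, so the numerator equals $2i\,\Im J_{it, m}(z)$ and in particular vanishes at $t=0$. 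I would split according to $|t|\gtrless 1$: for $|t|\geqslant 1$ the direct bound on $|J_{it, m}(z)|$ together with $1/|\sinh\pi t|\ll 1/|t|$ produces the $e^{|z|^2/4}/|t|$ contribution, while for $|t|<1$ a first-order Taylor expansion in $t$ at $0$ of the entire function $t\mapsto \Im J_{it, m}(z)$ extracts the factor $t$ that cancels $\sinh\pi t\sim\pi t$, producing the bounded $+1$ contribution.

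The main obstacle is the even case: the reciprocal $|\Gamma|$ identities needed for the low-$n$ terms with $n_1<m/2$ require careful handling, and the two regimes $|t|\gtrless 1$ have to be spliced in a way that remains uniform in $m$. Everything ultimately reduces to elementary estimates on the resulting double sums, but the combinatorics of extracting the $\sqrt{t^2+m^2}$ factor cleanly, without picking up spurious factorial or polynomial growth in $|z|$, is where the most patient bookkeeping is required.
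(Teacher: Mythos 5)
Your proposal is correct in substance, and it is a more hands-on execution of the same underlying mechanism as the paper's proof: the power series of $J_{\nu}$ plus the reflection formula applied to the product of the two Gamma factors, which is what simultaneously produces the $\sinh \pi t$ (resp.\ $\cosh \pi t$) cancellation and the $\sqrt{t^2+m^2/4}$ in the denominator. The difference is that the paper does this Gamma evaluation \emph{only for the leading term} $n_1=n_2=0$, and packages the entire remainder of the series into Watson's multiplicative error $J_{\nu}(z)=\frac{(z/2)^{\nu}}{\Gamma(\nu+1)}(1+\theta)$ with $|\theta|<(\exp(|z|^2/4)-1)/|\nu_{\oldstylenums{0}}+1|$ and $|\nu_{\oldstylenums{0}}+1|\geqslant |t|$ (even $m$) or $|it+\tfrac 1 2|$ (odd $m$). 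That one citation eliminates all of the bookkeeping you flag as the main obstacle — in particular the reciprocal Gamma identities for the terms with $n_1<m/2$ — and it is also what yields the exact shape $\exp(|z|^2/4)/|t|+1$. Two cautions about your version. First, you have slightly misread the even-$m$ bound: $(\exp(|z|^2/4)/|t|+1)/\sqrt{t^2+m^2}$ is \emph{permitted to blow up like $1/|t|$ as $t\to 0$}, so no cancellation between $J_{it,m}$ and $J_{-it,-m}$ near $t=0$ is required for this lemma (the $t=0$ regularity of $\bfJ_{0,m}$ is handled elsewhere, and for $|t|\leqslant 1$ the paper uses Lemma 4.4 instead); your Taylor-expansion step at $t=0$ is correct but superfluous, and making it uniform in $m$ and $z$ would cost you extra work bounding $\partial_t J_{it,m}$. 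Second, if you bound the two one-dimensional series crudely by $e^{|z|^2/4}$ each, you get $e^{|z|^2/2}$ even in the even case, which does not imply the stated $e^{|z|^2/4}/|t|+1$ when $|z|$ is large and $|t|$ moderate; to recover the lemma as stated you must retain the extra factorials coming from the Gamma products (the sums are then of Bessel type $\sum x^n/(n!(n+k)!)\ll e^{2\sqrt{x}}$, giving $e^{2|z|}\Lt e^{|z|^2/4}$), or simply follow Watson.
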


\begin{proof}
	Let us assume that $(it, m)$ is generic, as otherwise  \eqref{4eq: bound in t m} is trivial.
	
	Recall from \cite[3.13 (1)]{Watson} that for $\nu \neq - 1, -2, -3, ...$
	\begin{align}\label{4eq: Jv = zv...}
	J_{\nu} (z) = \frac {\lp  z /2 \rp^{\nu}} {\Gamma (\nu + 1)} (1 + \theta), 
	\end{align}
	where
	\begin{align}\label{4eq: |theta|, 1}
	|\theta | < \exp \bigg( \frac {|z|^2} {4|\nu_{\oldstylenums{0}} + 1|} \bigg) -1,
	\end{align}
	and  $|\nu_{\oldstylenums{0}} + 1|$ is the smallest  of the numbers $|\nu+1|$, $|\nu+2|$, $|\nu+3|$.... Note that the bound in \eqref{4eq: |theta|, 1} is awful when $ |\nu_{\oldstylenums{0}} + 1| $ is very close to $0$. By modifying the arguments in \cite[\S 2.11]{Watson}, we may also prove
	\begin{align}\label{4eq: |theta|, 2}
	|\theta | < \frac {\exp \lp   {|z|^2} / 4 \rp - 1} {|\nu_{\oldstylenums{0}} + 1|} .
	\end{align}
	We now apply these to $	J_{it, \shskip m} (z)$ as defined in \eqref{1eq: defn of J mu m (z)}. For $\nu = -   it \pm \frac 1 2 m$, we have
	\begin{equation*}%\label{1eq: Bessel function}
	|\nu_{\oldstylenums{0}} + 1| \geqslant \left\{ \begin{aligned}
&  |t|, & & \text{ if $m$ is even,} \\ 
&\left|   it + \tfrac 1 2 \right|, \hskip 5 pt & & \text{ if $m$ is odd.}
	\end{aligned} \right. 
	\end{equation*}   
	Also note that 
	\begin{equation*}%\label{1eq: Bessel function}
	\frac {\left| {\lp   z /2 \rp^{-  it + \frac 1 2 m }}   {\lp  \bar z /2 \rp^{-  it - \frac 1 2 m}} \right| } {\left| \Gamma  \lp -  it - \tfrac{1}{2} m + 1 \rp \Gamma  \lp -  it + \tfrac{1}{2} m + 1 \rp \right|} = \left\{\begin{aligned}
& \ds \frac{ | \sinh (  \pi  t  )|}{ \pi \sqrt{  t^2 + m^2/4}},\ & &\text{ if $m$ is even,} \\
& \ds \frac{  \cosh (  \pi  t  )}{ \pi \sqrt{  t^2 + m^2/4}},\ & &\text{ if $m$ is odd.}
	\end{aligned}\right. 
	\end{equation*}  
	It then follows from \eqref{4eq: Jv = zv...}-\eqref{4eq: |theta|, 2} that 
	\begin{equation*}
\bfJ_{it, \shskip m} (z) \Lt
\left\{ \begin{aligned}
& \ds \frac{ \min \left\{ \exp \big(  {|z|^2} / {4 |t| } \big), \exp \big(  {|z|^2} / {4} \big) / |t| + 1 \right\} }{ \sqrt{  t^2 + m^2 }} ,\ & &\text{ if $m$ is even,} \\
& \ds \frac{ \exp \big(  {|z|^2} / {|4 it +2|} \big)}{   \sqrt{  t^2 + m^2 }},\ & & \text{ if $m$ is odd.}
\end{aligned}\right.  
	\end{equation*}
This clearly implies \eqref{4eq: bound in t m}.
\end{proof}	

%The following lemma is very useful, especially in the proof of Theorem \ref{thm: analytic}. We shall exploit  in the proof of this lemma some deep results of L. J. Landau for classical Bessel functions. It should be remarked that  weaker versions of this lemma obtained from more well-known and classical results (for example \cite[8.43 (1, 2)]{Watson} and \cite[\S 3.14.2, 3.14.3]{MO-Formulas}) would also be sufficient for our purpose.
	
\begin{lem}\label{lem: uniform bound, main}
	For any given $0 \leqslant r < \text{\large$\frac 1 3$} $, we have
	\begin{align}\label{4eq: uniform bound, main}
	\bfJ_{it, \shskip m} (z) \Lt_{\shskip r} \frac 1 { (|m|+1)^{r} |z|^{ 1/3 - r} } . 
	\end{align}
\end{lem}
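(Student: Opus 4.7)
The plan is to proceed along the lines of the proof of Lemma \ref{lem: bound 1/z}, starting from the integral representation \eqref{appendix: Integral representation}, which is absolutely convergent for $\mu = it$ since $\Re(it) = 0 < \text{\large$\frac 1 4$}$. Writing $z = xe^{i\phi}$ and taking absolute values inside the integral gives
\begin{align*}
|\bfJ_{it,\shskip m}(xe^{i\phi})| \leqslant \frac{2}{\pi}\int_0^\infty \bigl|J_m\bigl(xY(ye^{i\phi})\bigr)\bigr|\,\frac{dy}{y}.
\end{align*}
The next ingredient is the interpolation of the two classical uniform Landau bounds $|J_m(u)| \Lt (|m|+1)^{-1/3}$ and $|J_m(u)| \Lt u^{-1/3}$. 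Multiplicative interpolation, writing $|J_m(u)| = |J_m(u)|^{3r}\cdot|J_m(u)|^{1-3r}$ and applying each bound to the respective factor, produces, for $0\leqslant r\leqslant\text{\large$\frac 1 3$}$,
\begin{align*}
|J_m(u)|\Lt_{r} (|m|+1)^{-r}\,u^{r-1/3}.
\end{align*}

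Substituting this into the integral and pulling out the factor $x^{r-1/3}(|m|+1)^{-r}$, the task is reduced to showing that
\begin{align*}
I(\phi) := \int_0^\infty Y(ye^{i\phi})^{r-1/3}\,\frac{dy}{y}
\end{align*}
is bounded uniformly in $\phi$ for $0\leqslant r < \text{\large$\frac 1 3$}$. For this I would use the elementary identity $Y(ye^{i\phi})^2 = y^2 + y^{-2} + 2\cos 2\phi$, which together with $\cos 2\phi \geqslant -1$ yields the pointwise lower bound $Y(ye^{i\phi}) \geqslant |y-y^{-1}|$. Since $r - 1/3 < 0$, this converts into the $\phi$-independent dominating bound
\begin{align*}
I(\phi) \leqslant \int_0^\infty |y-y^{-1}|^{r-1/3}\,\frac{dy}{y} = \int_{-\infty}^{\infty}(2|\sinh u|)^{r-1/3}\,du
\end{align*}
after the change of variable $u=\log y$. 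The last integral is finite because the integrand behaves like $|2u|^{r-1/3}$ near $u=0$ (integrable since $r > -2/3$) and like $e^{(r-1/3)|u|}$ as $|u|\to\infty$ (integrable since $r < 1/3$), giving the asserted bound.

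The main obstacle is controlling $I(\phi)$ uniformly in $\phi$ near the degenerate points $\phi = \pm\pi/2$, $y = 1$, where $Y$ vanishes; the crude but $\phi$-uniform lower bound $Y\geqslant|y-y^{-1}|$ dispatches this cleanly, and the exponential growth of $\sinh u$ at infinity is exactly what forces the strict inequality $r < \text{\large$\frac 1 3$}$.
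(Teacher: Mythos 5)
Your proposal is correct and follows essentially the same route as the paper: the paper likewise interpolates Landau's bounds $|J_m(u)|\Lt (|m|+1)^{-1/3}$ and $|J_m(u)|\Lt u^{-1/3}$ (together with $|J_m|\leqslant 1$) to get $|J_m(u)|\Lt_{r}(|m|+1)^{-r}u^{r-1/3}$, inserts this into the integral representation \eqref{appendix: Integral representation}, and controls the remaining integral via $Y(ye^{i\phi})\geqslant |y-1/y|$ exactly as in the proof of Lemma \ref{lem: bound 1/z}. The only difference is presentational: you spell out the multiplicative interpolation and the $\log$-substitution that the paper leaves implicit.
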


\begin{proof}
	
	For this lemma, we need some uniform bounds for the classical Bessel function $J_{\nu} (x)$ of real argument.
	First of all, it is well known that for all $x $ real and $\nu \geqslant 0$,
	\begin{align*}
		& |J_{\nu} (x)| \leqslant 1.
	\end{align*} 
	See for example \cite[9.1.60]{A-S}.
	Moreover, the following bounds are obtained by L. J. Landau \cite{Landau-Bessel} in his study of monotonicity properties of Bessel functions,
	\begin{align*}
		|J_{\nu} (x)|  <  b/ \nu^{ 1/3}, \hskip 10 pt |J_{\nu} (x)|  \leqslant  c/|x|^{1/3},
	\end{align*}
	with $b = 0.674885...$ and $c = 0.785746...$. Combining the bounds above, we deduce easily that, for any  $0 \leqslant r \leqslant \text{\large$\frac 1 3$}$,
	\begin{align}\label{4eq: bound for Jv(x)}
		J_{\nu} (x)  \Lt_{\shskip r} \frac 1 { (  \nu + 1)^{ r } |x|^{{ 1/3 -r}}}, \hskip 15 pt x \text{ real, } \nu \geqslant 0.
	\end{align}
	
We now return to the proof. By the arguments that lead us to \eqref{4eq: bound for J, 1}, if we apply \eqref{4eq: bound for Jv(x)} in place of \eqref{4eq: bound for Jm (x)} then 
\begin{align*}
\bfJ_{it, \shskip m} (x e^{i \phi}) & \Lt_{\shskip r} \frac 1 { ( |m| + 1)^{r} x^{1/3 - r}} \int_0^{\infty} \frac { d y} {y Y (y e^{i \phi})^{1/3 - r}} \\
& \leqslant \frac 1 { ( |m| + 1)^{r} x^{1/3 - r}} \int_0^{\infty} \frac { d y} {y |y-1/y|^{1/3 - r}} \\
& \Lt_{r} \frac 1 { ( |m| + 1)^{r} x^{1/3 - r}},
\end{align*}
provided that $ 0 \leqslant r < \text{\large$\frac 1 3$}$. %Then follows \eqref{4eq: uniform bound, main} and the lemma.
\end{proof}

\section{Completion of the proof of Theorem \ref{thm: main}}\label{sec: estimates for Gf}

To conclude the proof of Theorem \ref{thm: main}, we need to prove that $G_f (z)$ satisfies the requirements
of Corollary \ref{cor::Bessel transform--Inversion} so that Bessel inversion holds and that the change of order of the limit, summation
and integration which we employed in the proof is justified.  

\subsection{Bounds on the Bessel transform} \label{sec: bounds for Gf}

First, we prove the following estimate for the Bessel transform $\breve{G}_{f} (it ,  m)$ as defined in \eqref{2eq:Bessel transform--definition}. Our proof can be easily adapted in the real case so that the estimates in \cite{BaruchMao-Whittaker} may be considerably improved. This is based on the idea of applying the differential operator $\nabla$ (instead of recurrence relations as in  \cite{BaruchMao-Whittaker}) and an observation of the anonymous referee as in Lemma \ref{lem: referee's observation}. 

\begin{thm}\label{thm: analytic}
	We have $ \breve{G}_{f} (it ,  m) =  O \big( 1 / ( t^2 + m^2 + 1 )^{A} \big)  $ 
	for   arbitrary $A \geqslant 0$, with the implied constant depending on $A$ and $f$.
\end{thm}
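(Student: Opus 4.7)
The plan is to obtain arbitrary polynomial decay via repeated integration by parts, exploiting that $\bfJ_{it,m}(z)$ is a joint eigenfunction of the commuting operators $\nabla$ and $\overline{\nabla}$ with eigenvalues $(it+m/2)^2$ and $(it-m/2)^2$ (see \eqref{4eq: nabla J}). The crucial auxiliary fact, to be recorded as Lemma \ref{lem: referee's observation}, is that the leading oscillatory factors $u_\pm(z) := e^{\pm i(z+\bar z)}/|z|$ appearing in $G_f$ via Theorem \ref{thm: Jacquet} are themselves joint eigenfunctions of $\nabla$ and $\overline{\nabla}$ with eigenvalue $\tfrac{1}{4}$. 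Indeed, $z\partial_z u_\pm = (\pm iz - \tfrac{1}{2}) u_\pm$ yields $(z\partial_z)^2 u_\pm = (-z^2 + \tfrac{1}{4}) u_\pm$, so the $z^2$ term inside $\nabla$ is precisely cancelled; the statement for $\overline{\nabla}$ is analogous by symmetry.

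I would then compute the formal adjoints of $\nabla$ and $\overline{\nabla}$ with respect to $d^\times z = |z|^{-2}dz$. With $z = re^{i\theta}$, $d^\times z = d(\log r)\,d\theta$ and $z\partial_z = \tfrac{1}{2}(r\partial_r - i\partial_\theta)$, a short computation gives $(z\partial_z)^{\ast} = -\bar z\partial_{\bar z}$, and hence $\nabla^{\ast} = \overline{\nabla}$, $\overline{\nabla}^{\ast} = \nabla$. Using \eqref{4eq: nabla J} and its conjugate, for every integer $N \geq 1$ integration by parts yields
\begin{equation*}
(t^2 + m^2/4)^{2N}\, \breve{G}_f(it, m) = \int_{\BCx} \bfJ_{it, m}(z)\, \nabla^N \overline{\nabla}^N G_f(z)\, d^\times z,
\end{equation*}
once the $2N$ boundary terms are shown to vanish. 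That rests on Theorem \ref{thm: Jacquet} (so $G_f$ vanishes near $0$), the decay estimates \eqref{4eq: J(z)=O(1/|z|)} and \eqref{4eq: partial J} for $\bfJ_{it, m}$ at infinity, together with the uniform $O(1/|z|)$ bound on $\nabla^j \overline{\nabla}^k G_f$ supplied by the inductive argument below.

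The heart of the proof is to bound $\nabla^N \overline{\nabla}^N G_f(z)$ uniformly in $z$. By Theorem \ref{thm: Jacquet} and \eqref{2eq:definition--Gf}, $G_f$ is a sum of two terms of the form $u_\pm(z)\widetilde H_\pm(z)$ where $\widetilde H_\pm(z) = H_\pm(4\pi/z)/(4\pi)$ is smooth in the real coordinates of $1/z$, vanishes for $|z|$ below some constant $c > 0$, and is bounded with all iterated $(z\partial_z)^a (\bar z\partial_{\bar z})^b$ derivatives bounded on $|z| \geq c$, since each $z\partial_z$ applied to $\widetilde H_\pm$ brings out a factor of $1/z$. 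Applying $\nabla$ to $u_\pm \widetilde H_\pm$ and exploiting the eigenfunction cancellation of the $z^2$ term, the Leibniz rule gives
\begin{equation*}
\nabla(u_\pm \widetilde H_\pm) = u_\pm \cdot \bigl[\tfrac{1}{4} \widetilde H_\pm + (\pm 2iz - 1)\, z\partial_z \widetilde H_\pm + (z\partial_z)^2 \widetilde H_\pm\bigr],
\end{equation*}
in which the growth factor $z$ in the middle term is absorbed by $z\partial_z \widetilde H_\pm = O(1/|z|)$. A parallel identity holds for $\overline{\nabla}$. By induction on $N$, $\nabla^N \overline{\nabla}^N (u_\pm \widetilde H_\pm) = u_\pm \cdot \widetilde H_{\pm, N}$ with $\widetilde H_{\pm, N}$ again smooth and bounded on $|z| \geq c$, whence $|\nabla^N \overline{\nabla}^N G_f(z)| \leq C_N/|z|$.

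Combining this with Lemma \ref{lem: bound 1/z} (namely $|\bfJ_{it, m}(z)| \Lt (t^2+m^2+1)/|z|$) and the convergence of $\int_{|z| \geq c} |z|^{-2}\, d^\times z$ yields
\begin{equation*}
|\breve{G}_f(it, m)| \Lt_{N} \frac{t^2 + m^2 + 1}{(t^2 + m^2/4)^{2N}}.
\end{equation*}
For $t^2 + m^2 \geq 1$, this delivers any prescribed polynomial decay upon taking $N$ large in terms of $A$; on the bounded region $t^2 + m^2 \leq 1$ the trivial bound $|\breve G_f(it, m)| = O(1)$ (from $G_f \in L^1(\BCx, d^\times z)$ together with the uniform boundedness of $\bfJ_{it, m}$ on compacta of $\BCx$) completes the estimate. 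The principal obstacle is the inductive step preserving the structural form $u_\pm \cdot (\text{bounded smooth})$ through arbitrarily many applications of $\nabla$ and $\overline{\nabla}$; without the eigenfunction observation, the $z^2$ and $\bar z^2$ contributions inside these operators would destroy the decay and force reliance on the cruder recurrence-based estimates used in \cite{BaruchMao-Whittaker}.
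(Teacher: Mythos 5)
Your proposal is correct and follows essentially the same route as the paper: repeated integration by parts against the eigenfunction equation for $\nabla$ (and $\overline\nabla$), with the key point being exactly the referee's observation (Lemma \ref{lem: referee's observation}) that the oscillatory factor $e(\pm(z+\bar z)/2\pi)/|z|$ cancels the $z^2$ term in $\nabla$, so that the class of functions $G_f$ coming from Theorem \ref{thm: Jacquet} is stable under the operator and retains the $O(1/|z|)$ decay; the final estimate via Lemma \ref{lem: bound 1/z} is also the same. The only cosmetic difference is that the paper iterates $\nabla$ alone (each application contributing a factor $(it+m/2)^{-2}$ of modulus $(t^2+m^2/4)^{-1}$), whereas you iterate $\nabla^N\overline{\nabla}^N$.
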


First of all, we need a simple formula for partial integration. It will be very convenient to integrate on differential forms in terms of $z$ and $\bar z$. In particular, the volume form  on $\BC$ is $i \shskip d z   \nwedge  d \bar z  $ (there is a slight abuse of notation as $d z$ was the Haar measure on $\BC$). By Stokes' theorem,  one can easily derive the following formula; the proof is left to the reader as an exercise.
\begin{lem}\label{lem: Integration by parts}
	Let $F$, $G \in C^{\infty}(\BCO)$. Assume that $
	F(z) G(z)  =  o(1)$ for both $|z| \ra 0$ and $|z| \ra \infty$. 
	We have
	\begin{equation*}
	\sideset{}{_{\BC \smallsetminus\{0\}}}{\iint}   z \frac{\partial F (z)}{\partial z} \cdot G(z) \shskip \shskip \frac {i \shskip d z \nwedge d \bar z} {z \bar z} = - \sideset{}{_{\BC \smallsetminus\{0\}}}{\iint}  F(z) \cdot z \frac{\partial G (z)}{\partial z} \shskip \frac {i \shskip d z \nwedge d \bar z} {z \bar z},
	\end{equation*}
	provided that one of  the integrals is convergent.
\end{lem}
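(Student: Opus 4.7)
The plan is to reduce the identity to the vanishing of an integral of a pure derivative and then invoke Stokes' theorem on an annulus, letting the inner radius tend to $0$ and the outer radius tend to $\infty$. More precisely, by the Leibniz rule
\begin{equation*}
\partial_z (F G) = (\partial_z F) \cdot G + F \cdot \partial_z G,
\end{equation*}
the claim is equivalent to
\begin{equation*}
\iint_{\BC \smallsetminus \{0\}} \partial_z (F G) \cdot \frac{i\, dz \nwedge d\bar z}{\bar z} = 0,
\end{equation*}
once we absorb the factor of $z/(z\bar z) = 1/\bar z$. The point is that the integrand is exact: setting $\omega = (F G / \bar z)\, i\, d\bar z$, one checks (using $\partial_z (1/\bar z) = 0$ away from the origin) that $d \omega = (\partial_z (F G)/\bar z)\, i\, dz \nwedge d\bar z$.

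For $0 < r < R < \infty$, apply Stokes' theorem on the annulus $A_{r,R} = \{ r \leqslant |z| \leqslant R\}$. The two boundary circles, parametrized by $z = \rho e^{i\theta}$, give $d\bar z = -i \rho e^{-i\theta} d\theta$, so that $\omega|_{|z| = \rho} = F(\rho e^{i\theta}) G(\rho e^{i\theta})\, d\theta$; hence
\begin{equation*}
\iint_{A_{r,R}} d\omega = \int_0^{2\pi} F(R e^{i\theta}) G(R e^{i\theta})\, d\theta - \int_0^{2\pi} F(r e^{i\theta}) G(r e^{i\theta})\, d\theta.
\end{equation*}
Under the hypothesis $F(z) G(z) = o(1)$ as $|z| \to 0$ and $|z| \to \infty$ (interpreted uniformly in $\arg z$, as is needed for the integrand on the circle to tend to $0$ uniformly), both boundary terms vanish in the limit $r \to 0$, $R \to \infty$.

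It remains to pass to the limit inside the interior integrals. Under the standing assumption that one of the two integrals in the statement is convergent, write the Leibniz identity on the annulus:
\begin{equation*}
\iint_{A_{r,R}} (\partial_z F) G\, \tfrac{i\, dz \nwedge d\bar z}{\bar z} + \iint_{A_{r,R}} F(\partial_z G)\, \tfrac{i\, dz \nwedge d\bar z}{\bar z} = \iint_{A_{r,R}} d\omega.
\end{equation*}
The right-hand side tends to $0$, and the assumed convergence of one of the two integrals on the left forces the other also to converge to the negative of the first, which is the desired identity. The main (mild) obstacle is the boundary analysis: one has to interpret the decay hypothesis uniformly in the angular variable so that the circle integrals actually vanish in the limit; this is standard and implicit in the statement, but deserves to be flagged since no absolute integrability of $(\partial_z F) G / \bar z$ is assumed from the outset.
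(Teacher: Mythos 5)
Your proof is correct and follows exactly the route the paper intends: the lemma is stated there with the remark that it follows from Stokes' theorem and is left as an exercise, and your argument (exactness of $\omega = (FG/\bar z)\,i\,d\bar z$, Stokes on an annulus, vanishing of the boundary circle integrals via $FG = o(1)$, and then using the assumed convergence of one side to pass to the limit) is the standard way to carry that out. Your computations of $d\omega$ and of the restriction $\omega|_{|z|=\rho} = FG\,d\theta$ check out, and your flag about reading the decay hypothesis uniformly in $\arg z$ is a reasonable and accurate observation.
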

%In view of the bounds for $\bfJ_{it, \shskip m} (z)$ and $\partial \bfJ_{it, \shskip m} (z) /\partial z$ in  \eqref{4eq: J(z) <1/z} and \eqref{4eq: partial J}, we have the following corollary. 

\begin{cor}\label{cor: Integration by parts}
    Let $\nabla$ be defined as in {\rm\eqref{4eq: nabula}}. Let $G (z)$ be a smooth function that vanishes around $0$ and such that 
    $G (z) = o (1)$, $ \partial  G (z) / \partial z = o (1)$ for $|z| \ra \infty$.
	We have
	\begin{align}\label{5eq: Integration by parts}
	\int_{\BC \smallsetminus\{0\}}   \bfJ_{it, \shskip m}(z) \cdot  G(z) \dx z = \frac 1 {(   i t + m / 2)^2}  \int_{\BC \smallsetminus\{0\}}  \bfJ_{it, \shskip m}(z) \cdot \nabla G(z) \dx z ;
	\end{align}
 the integral on the left is convergent. 
\end{cor}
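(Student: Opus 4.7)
The plan is to combine the eigenvalue identity $\nabla \bfJ_{it,\shskip m} = (it+m/2)^2 \bfJ_{it,\shskip m}$ from \eqref{4eq: nabla J} with two applications of the integration-by-parts identity in Lemma \ref{lem: Integration by parts}, exploiting the factorization $\nabla = D^2 + z^2$ with $D = z\,\partial/\partial z$, which is immediate from \eqref{4eq: nabula}.

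First I would establish absolute convergence of the left-hand integral. Since $G(z)$ vanishes on a neighborhood of $0$, the only issue is at infinity. There the bound $\bfJ_{it,\shskip m}(z) = O(1/|z|)$ from \eqref{4eq: J(z)=O(1/|z|)}, the hypothesis $G(z) = o(1)$, and the shape of $\dx z$ (in polar coordinates proportional to $dr\,d\theta/r$) together make the tail integrable. Next, using \eqref{4eq: nabla J} and linearity, rewrite
\begin{align*}
(it+m/2)^2 \int_{\BC \smallsetminus\{0\}} \bfJ_{it,\shskip m}(z)\, G(z)\, \dx z
= \int_{\BC \smallsetminus\{0\}} D^2 \bfJ_{it,\shskip m}(z)\cdot G(z)\,\dx z + \int_{\BC \smallsetminus\{0\}} z^2 \bfJ_{it,\shskip m}(z)\cdot G(z)\,\dx z.
\end{align*}
The $z^2$-piece is trivially equal to $\int \bfJ_{it,\shskip m}(z)\cdot z^2 G(z)\,\dx z$. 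For the $D^2$-piece I apply Lemma \ref{lem: Integration by parts} twice: once with the lemma's $F$ taken to be $D\bfJ_{it,\shskip m}$, and once with the lemma's $F$ taken to be $\bfJ_{it,\shskip m}$ and the lemma's $G$ replaced by $DG$. The two resulting minus signs cancel, giving
\begin{align*}
\int_{\BC \smallsetminus\{0\}} D^2 \bfJ_{it,\shskip m}(z)\cdot G(z)\,\dx z = \int_{\BC \smallsetminus\{0\}} \bfJ_{it,\shskip m}(z)\cdot D^2 G(z)\,\dx z.
\end{align*}
Summing the two pieces produces $\int \bfJ_{it,\shskip m}\cdot \nabla G\,\dx z$ on the right, and dividing by $(it+m/2)^2$ yields \eqref{5eq: Integration by parts}.

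The only real work is in verifying the boundary condition $FG = o(1)$ required by Lemma \ref{lem: Integration by parts} at each step. Near $z = 0$ this is automatic because $G$ vanishes identically there. At infinity, the first application needs $(D\bfJ_{it,\shskip m})\cdot G = o(1)$: by \eqref{4eq: partial J} one has $\partial \bfJ_{it,\shskip m}/\partial z = O(1/|z|)$, so $D\bfJ_{it,\shskip m} = O(1)$, while $G = o(1)$ by hypothesis, giving $o(1)$. The second application needs $\bfJ_{it,\shskip m}\cdot DG = o(1)$: here $\bfJ_{it,\shskip m} = O(1/|z|)$ by \eqref{4eq: J(z)=O(1/|z|)}, while $DG = z\,\partial G/\partial z = o(|z|)$ by the hypothesis $\partial G/\partial z = o(1)$; the product is again $o(1)$. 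Both applications of Lemma \ref{lem: Integration by parts} are therefore legitimate, which is the only nontrivial (though elementary) step and completes the proof.
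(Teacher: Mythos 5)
Your proposal is correct and follows essentially the same route as the paper: the eigenvalue identity \eqref{4eq: nabla J} combined with two applications of Lemma \ref{lem: Integration by parts} (using $\nabla = (z\,\partial/\partial z)^2 + z^2$), with the boundary conditions checked via \eqref{4eq: J(z)=O(1/|z|)} and \eqref{4eq: partial J} together with the hypotheses on $G$. The paper's proof is a one-line version of exactly this argument, so you have simply supplied the details it leaves implicit.
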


\begin{proof} The identity \eqref{5eq: Integration by parts} follows from two applications of Lemma \ref{lem: Integration by parts}, along with  
	\begin{align*}
	\nabla \bfJ_{it, \shskip m} (z) = (  i t + m / 2)^2 \bfJ_{it, \shskip m}(z) .
	\end{align*}
See \eqref{4eq: nabla J}. Note that the condition of Lemma  \ref{lem: Integration by parts} may be verified by  the bounds for $\bfJ_{it, \shskip m} (z)$ and $\partial \bfJ_{it, \shskip m} (z) /\partial z$  in  \eqref{4eq: J(z)=O(1/|z|)} and \eqref{4eq: partial J} together with the conditions on $G(z)$.
\end{proof}

The following lemma is due to the anonymous referee. 

\begin{lem}\label{lem: referee's observation}
	Define $\mathscr{V}$ to be the space of functions on $\BC \smallsetminus \{0\}$ of the form
	\begin{align}
	G (z) = \frac {e ((z + \widebar{z} )/2\pi)  } {|z|} H_{+} (1/z) + \frac {e (- (z + \widebar{z} )/2\pi)  } {|z|} H_{-} (1/z),
	\end{align}
	where $H_{\pm} \in C_c^{\infty} (\BC)$. Then $ \mathscr{V} $ is stable by the differential operator $\nabla$. Moreover, if $G (z)$ is a function in $\mathscr{V}$ as above then
	$G (z) = O (1/|z|)$, $ \partial  G (z) / \partial z = O  (1/|z| )$ for $|z| \ra \infty$.
\end{lem}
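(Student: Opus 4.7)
The plan is first to observe that the two ``oscillatory'' factors
$\phi_{\pm}(z)=e^{\pm i(z+\bar z)}/|z|$ appearing in the definition of $\mathscr{V}$ are themselves eigenfunctions of $\nabla$ with eigenvalue $\tfrac14$. Writing $D=z\partial/\partial z$, so that $\nabla=D^{2}+z^{2}$, a short direct computation gives $D\phi_{\pm}=(\pm iz-\tfrac12)\phi_{\pm}$, whence
\[
D^{2}\phi_{\pm}=\bigl[\pm iz+(\pm iz-\tfrac12)^{2}\bigr]\phi_{\pm}=(-z^{2}+\tfrac14)\phi_{\pm},
\]
and therefore $\nabla\phi_{\pm}=\tfrac14\phi_{\pm}$. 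This eigenfunction identity is the real reason $\nabla$ preserves the qualitative shape of elements of $\mathscr{V}$.

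Next I would expand $\nabla(\phi_{\pm}H(1/z))$ for a generic $H\in C_c^{\infty}(\BC)$ using the Leibniz rule for the order-two operator $\nabla=D^{2}+z^{2}$:
\[
\nabla\bigl(\phi_{\pm}H(1/z)\bigr)=\tfrac14\phi_{\pm}H(1/z)+\phi_{\pm}\cdot D^{2}H(1/z)+2(\pm iz-\tfrac12)\phi_{\pm}\cdot DH(1/z).
\]
Letting $D_{w}=w\,\partial/\partial w$ in the auxiliary variable $w=1/z$, the chain rule produces the clean identities $DH(1/z)=-(D_{w}H)(1/z)$ and $D^{2}H(1/z)=(D_{w}^{2}H)(1/z)$. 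Since $D_{w}$ preserves $C_c^{\infty}(\BC)$, the first two terms above are already of the required form $\phi_{\pm}\cdot F(1/z)$ with $F\in C_c^{\infty}(\BC)$. The \textbf{main obstacle} is the cross term: it carries the factor $\pm iz=\pm i/w$, which is singular at $w=0$. The key point is that the companion factor $D_{w}H=w\,\partial H/\partial w$ supplies a compensating $w$; indeed
\[
-2(\pm i/w-\tfrac12)(D_{w}H)(w)=\mp 2i\,\partial H/\partial w(w)+D_{w}H(w),
\]
which extends smoothly across $w=0$ and has compact support on all of $\BC$. Hence the bracket is $F_{\pm}(1/z)$ for some $F_{\pm}\in C_c^{\infty}(\BC)$, which proves $\nabla(\phi_{\pm}H(1/z))\in\mathscr{V}$; by linearity $\nabla G\in\mathscr{V}$.

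The asymptotic statements are then immediate. Since $z+\bar z=2\,\Re z$ is real, $|\phi_{\pm}(z)|=1/|z|$ identically, while $H_{\pm}(1/z)$ is bounded on $\BCx$ (tending to $H_{\pm}(0)$ as $|z|\to\infty$), so $G(z)=O(1/|z|)$. For the derivative, one has $\partial\phi_{\pm}/\partial z=(\pm i-1/(2z))\phi_{\pm}=O(1/|z|)$ and $\partial_{z}H_{\pm}(1/z)=-z^{-2}(\partial H_{\pm}/\partial w)(1/z)=O(1/|z|^{2})$, and the Leibniz rule at once yields $\partial G/\partial z=O(1/|z|)$ as $|z|\to\infty$.
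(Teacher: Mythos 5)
Your proposal is correct and amounts to the same direct computation as the paper's proof: the paper simply writes out $\partial/\partial z$ and $\nabla$ applied to $e(\pm(z+\bar z)/2\pi)|z|^{-1}H_{\pm}(1/z)$ and reads off that the result has the required shape, which is exactly what your expansion via $\nabla\phi_{\pm}=\tfrac14\phi_{\pm}$ and the Leibniz rule for $D^2$ reproduces (your total coefficient $\tfrac14 H\mp 2i\,\partial H/\partial w+D_wH+D_w^2H$ agrees with the paper's $\tfrac14 H_{\pm}+2(z^{-1}\mp i)H^1_{\pm}+z^{-2}H^2_{\pm}$ after substituting $w=1/z$). Organizing the calculation around the eigenfunction identity for the oscillatory factor is a tidy way to see why the cross term is the only issue, and your observation that the factor $w$ in $D_wH$ cancels the singular $\pm i/w$ is precisely the point.
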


\begin{proof}
	By direct calculations, we find that
	\begin{align*}
	\frac{\partial}{\partial z} \left\{  \frac {e (\pm (z + \widebar{z} )/2\pi)  } {|z|} H_{\pm} (1/z) \right\} & = \frac {e (\pm (z + \widebar{z} )/2\pi)  } {|z|} \\
	\cdot & \lp    \lp - \frac 1 { 2 z } \pm   i   \rp  H_{\pm} (1/z) - \frac 1 {z^2} H_{\pm}^1 (1/z) \rp,
	\end{align*}
	and
	\begin{align*}
	\nabla \left\{  \frac {e (\pm (z + \widebar{z} )/2\pi)  } {|z|} H_{\pm} (1/z) \right\} &  = \frac {e (\pm (z + \widebar{z} )/2\pi)  } {|z|} \\
	\cdot & \lp \frac 1 4 H_{\pm} (1/z) +   2 \lp \frac 1 z \mp i \rp H_{\pm}^1 (1/z) + \frac 1 {z^2} H_{\pm}^2 (1/z) \rp,
	\end{align*}
	with 
	\begin{align*}
	H^1_{\pm} (z) = \partial H_{\pm} (z)/\partial z  , \quad H^2_{\pm} (z) =  \partial^2 H_{\pm} (z) /\partial z^2 . 
	\end{align*}
	Then our assertions become obvious. 
\end{proof}

%In view of \eqref{2eq:definition--Gf} and \eqref{3eq: Orbit integral--characterization},  the function $G_f (z)$ is %in the space $\mathscr{V}$. Theorem \ref{thm: analytic} follows immediately from applying Corollary \ref{cor: %Integration by parts} repeatedly. 

%Now we prove Theorem \ref{thm: analytic}. 

In view of \eqref{2eq:definition--Gf} and Theorem \ref{thm: Jacquet}, we have $G_f \in \mathscr{V}$. Hence Lemma \ref{lem: referee's observation} implies that $\nabla^{k}G_f \in \mathscr{V}$ for any integer $k \geqslant 0$ and that Corollary \ref{cor: Integration by parts} is applicable for each $\nabla^{k}G_f$. Theorem \ref{thm: analytic} follows immediately from applying Corollary \ref{cor: Integration by parts} repeatedly, along with a final estimation by Lemma \ref{lem: bound 1/z}.

From Theorem \ref{thm: analytic} we infer that the conditions in Corollary \ref{cor::Bessel transform--Inversion} are satisfied by $G_f (z)$.

\subsection{Change of order of limit, summation and integration} \label{sec: change of order}

Next, we prove that the expression inside the limit in \eqref{3eq: lim sum and int} is  convergent absolutely and uniformly in $z$. %We unify the even case and the odd case in an obvious way. 
By \eqref{4eq: bound 1/z}, we have
\begin{align*}%\label{5eq: bound  zJ}
\frac { |z| \bfJ_{it, \shskip 2 d} (z) } { \cos (z+\bar{z}) }
\Lt   t^2 + d^2 + 1   , 
\end{align*}
which is uniform on the set of $z$ such that $|\cos (z+\bar{z})| > \text{\Large$\tfrac 1 2$}$ say. By Theorem \ref{thm: analytic} with $A = 4$ say, we have
\begin{align*}
\breve{G}_{f} (it ,  2 d) \Lt \frac { 1} { ( t^2 + d^2 + 1 )^4    }.
\end{align*}
Consequently,
\begin{align*}
   \sum_{d=-\infty}^{\infty}\hskip -1pt \int_{-\infty}^{\infty} \hskip -1pt \frac{   \left|z \shskip \bfJ_{it, \shskip 2d} (z) \right|}{ |  \cos (z+\bar{z})| }  \left|\breve{G}_f (it, 2d) \right| \big( t^2  + d^2 \big) dt 
  \Lt  
\sum_{d = -\infty} ^{\infty} \hskip -1pt  \int_{  -\infty}^{\infty} \hskip -1pt \frac{ dt }{ ( t^2 + d^2 + 1)^{ 2 } } < \infty.
\end{align*}
Hence we can use the dominated convergence theorem to move the limit into the sum and integral in \eqref{3eq: lim sum and int}. Similar is the odd case.

\appendix

\section{Proof of Theorem \ref{theorem::Bessel transform--Inversion}}\label{appendix: Proof of B-M}
    
    Recall the Bessel transform defined in \eqref{2eq:Bessel transform--definition}. We have the following Parseval-Plancherel formula due to Bruggeman, Motohashi and Lokvenec-Guleska (\cite[(11.3)]{B-Mo} and \cite[Proposition 12.2.2]{B-Mo2}),
    \begin{align}\label{appendix: Parseval}
    \int_{\BC \smallsetminus\{0\}} G (z) \overline{F(z)} \dx z = \frac{1}{8} \sum_{m = -\infty}^{\infty} \int_{-\infty}^{\infty}   \breve{G} (it , m) \overline {\breve{F} (it , m)}  (t^2 + m^2/4   )d t,
    \end{align}
    for functions $G$, $F \in C_c^{\infty} (\BCO)$. Observe that
    \begin{align*}
    \overline{\breve{F} (i t, m)} = (-)^m \shskip \breve{\xoverline[0.75]{F\hskip 1 pt}} \hskip -1 pt (i t , m) .
    \end{align*}
%    where bars denote complex conjugates. 
It follows from \eqref{appendix: Parseval} that the Bessel transform establishes an (abstract) isometry between the Hilbert spaces 
\begin{align*}
L^2 (\BCO,   \dx z) \longrightarrow L^2 (\BZ \times \BR, ( t^2 + m^2 / 4 )dt / 8).
\end{align*}
Now let $G (z) \in  L^1 (\BCO, \dx z ) \cap L^2 (\BCO, \dx z)$ and $F (z) \in C_c^{\infty} (\BCO)$ so that their Bessel transform $\breve{G} (it , m)$ and $\breve{F} (it , m)$ given as in \eqref{2eq:Bessel transform--definition} are both well defined. We may then write \eqref{appendix: Parseval} in the following way
    \begin{align*}%\label{appendix: Parseval noncompact}
    \int_{\BC \smallsetminus\{0\}} & G(z) \overline{F(z)} \dx z  = \frac{1}{8} \sum_{m = -\infty}^{\infty} \int_{-\infty}^{\infty}  \int_{\BC\smallsetminus\{0\}} (-)^m \breve{G} (it , m)   \bfJ_{it, \shskip m}(z)  (  t^2 + m^2/4 )  \overline {F(z)} \dx z \shskip d t.
    \end{align*}
Thus the Bessel inversion formula \eqref{2eq::Bessel transform--Inversion formula} in Theorem \ref{theorem::Bessel transform--Inversion} follows immediately if $G (z)$ is continuous and the right hand side converges absolutely. This in turn follows from our assumption $	\breve{G} (it , m) = O\bigl( 1/ (t^2+ m^2 + 1)^{q} (|m|+1) \bigr)$ ($q > 1$) in  Theorem \ref{theorem::Bessel transform--Inversion}, along with the uniform bounds for $ 	\bfJ_{it, \shskip m} (z) $ in Lemma \ref{lem: bound in t m} and \ref{lem: uniform bound, main}. In fact, %to show the convergence we truncate the integral on the right hand side of \eqref{2eq::Bessel transform--Inversion formula} at $|t| = 1$. 
by the estimates in \eqref{4eq: bound in t m} and \eqref{4eq: uniform bound, main}, we have
\begin{equation*}
\bfJ_{it, \shskip m} (z)  \Lt \left\{ \begin{split}
&1 / (|t| + |m|), \hskip 5 pt && \text{ if $|t| > 1$,} \\
& 1, && \text{ if $|t| \leqslant 1$,}
\end{split} \right.   
\end{equation*}
which is uniform on any given compact subset of $\BCO$, say on the support of $F $. Hence the sum of integrals on the right hand side of \eqref{2eq::Bessel transform--Inversion formula} is uniformly and absolutely bounded by
\begin{align*}
\sum_{m= - \infty}^{\infty}  \frac{1}{|m|+1 } \lp \int_{ -\infty}^{-1} + \int_{1}^{\infty} \frac { dt}{(|t| + |m|)^{2q-1}} +  \int_{-1}^{1}  \frac {  dt}{(|m| + 1 )^{2q-2}} \rp < \infty,
\end{align*}
as desired.

%Our assumption on $\breve{G}$ then guarantees that the right side of \eqref{2eq::Bessel transform--Inversion formula} converges absolutely and locally uniformly, and hence defines a continuous function on $\BCx$. Denote this function by $\tilde{G}$. Then, a simple change of order of integration shows that 

\section{Representation theory of Bessel functions and distributions for $\SL_2 (\BC)$}\label{appendix: Bessel}
    
    In this appendix, we give a brief review of Bessel distributions and Bessel functions for $\SL_2 (\BC)$ in \cite{Chai-Qi-Bessel}. For additional results on Bessel distributions and Bessel functions see \cite{Baruch-GL2}-\cite{B-Mo-Kernel2}, \cite{CPS} and \cite{Mo-Kernel2}.  \nocite{Baruch-Split,BaruchMao-NA,BaruchMao-Real}

%\subsection{Representations of  $\SL_2 (\BC)$} 

\subsection{Bessel distributions  for $\SL_2 (\BC)$}\label{sec: Bessel distributions}

The Bessel distribution $J_{\pi, \shskip \psi}$ given by \eqref{1eq: defn of J mu m (z)}-\eqref{1eq: Bessel coefficient} were defined in an {\it ad hoc} way. The  definition of  $J_{\pi, \shskip \psi}$ in representation theory is another story. 

Let $G = \SL_2 (\BC)$ and $K = \SU_2 (\BC)$. Let $\pi$ be an infinite dimensional irreducible unitary
representation of $G$  on a Hilbert space $H$ and $\langle \,  ,\shskip \rangle $ be a $G$-invariant nonzero inner product on $H$. Let $H_\infty$ be the subspace of smooth vectors in $H$. Let $\psi = \psi_{\lambdaup}$ be an additive character as defined in \eqref{1eq: defn of psi}. 
It is well known that there exists a  nonzero continuous $\psi$-Whittaker
functional $L$ on $H_\infty$, unique up to scalars, satisfying
\begin{equation*}
L (\pi(n(u) )\varv)=\psi( u )L(\varv), \hskip 15 pt n (u) \in N, \   \varv\in H_\infty.
\end{equation*} 
Let
\begin{equation*}%\label{def: Wv(g)}
W_\varv(g)=L(\pi(g)\varv), \hskip 15 pt  \varv\in H_\infty, \  g\in G,
\end{equation*}
be the Whittaker function corresponding to $\varv$. 
We normalize the Whittaker functional $L$ so that
\begin{equation*}%\label{eq: inner product}
\langle \varv_1,\varv_2\rangle =\int_{\BCx}W_{\varv_1}(s(z))\overline
{W_{\varv_2}(s(z))}\dx z.
\end{equation*}
It is well known that for every $\varv_1,\varv_2 \in H_{\infty}$ the integral above is absolutely convergent and indeed gives a $G$-invariant inner product on $H_{\infty}$.

For every $f \in C^{\infty}_c (G)$ and $\varv \in H$ we define 
\begin{align*}
\pi (f) \varv = \int_G f(g) \pi (g) \varv \, d g,
\end{align*} 
where $d g$ is the Haar measure on $G$ defined after \eqref{1eq: Bessel coefficient}. 

Let  $\{ \varv_{\mathfrak i} \}$ be an orthonormal basis in $H_{\infty}$ such that each $\varv_{\mathfrak i}$ is contained in a $K$-isotypic component (see \cite[Lemma 8.1.1]{RRG-I} for this condition). We define the Bessel distribution $ J_{\pi,\shskip \psi} $ by 
\begin{align}\label{3eq: J(f)}
	J_{\pi,\shskip \psi} (f) = \sum_{\mathfrak i}  L (\pi(f)\varv_{\mathfrak i}) \overline {L (\varv_{\mathfrak i})}.
\end{align}
The distribution is independent on the choice of such orthonormal basis. 

The main result on Bessel distributions for $\SL_2 (\BC)$ is a regularity theorem in \cite{Chai-Qi-Bessel}. 

\begin{thm}\label{thm::Bessel distribution}
	Let $\pi$ be an irreducible unitary representation of $G$. Then there exists a real analytic function $j_{\pi,\shskip \psi} : B \varw B \ra \BC$ which is locally
	integrable on G such that
	\begin{align*}
	J_{\pi,\psi}(f)=\int_G f(g)j_{\pi,\psi}(g)dg,  
	\end{align*}
	for all $  f\in C_c^{\infty}(G)$.
\end{thm}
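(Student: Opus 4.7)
The approach is to realize $\pi$ in its Whittaker/Kirillov model, identify $j_{\pi,\shskip\psi}$ as the integral kernel of the long-Weyl intertwiner in that model, and then unfold the spectral sum in \eqref{3eq: J(f)} via a Parseval identity. The normalization of $L$ makes $\varv \mapsto W_\varv(s(\cdot))$ an isometric embedding of $H_\infty$ into $L^2(\BCx, \dx z)$ with dense image. To compute the action of $\pi(\varw)$ in this model for a principal series $\pi_{it,\shskip m}$, I would expand a flat section against a Mellin character on $\BCx$, apply the Gindikin--Karpelevich evaluation of the standard intertwiner, and recognize the resulting Mellin ratio as the one dual to $|z|^2 \bfJ_{it,\shskip m}(4\pi z)$ via the Barnes representation of the product $J_{-it - m/2}(z) J_{-it + m/2}(\bar z)$. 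This produces the formula \eqref{1eq: defn of j}, and, combined with the $(N,\psi)$-equivariance \eqref{1eq: Bessel function--NN rule}, defines $j_{\pi,\shskip \psi}$ on all of $B\varw B$; real-analyticity on the open cell is inherited from that of $J_\nu$.

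Next, I would unfold
\[
J_{\pi,\shskip \psi}(f)
= \int_G f(g) \sum_{\mathfrak i} W_{\pi(g) \varv_{\mathfrak i}}(1)\, \overline{W_{\varv_{\mathfrak i}}(1)}\, dg,
\]
writing $g = n(u) s(z)\varw\, n(v)$ on the open cell, pulling Whittaker equivariance out as $\psi(u)\psi(v) W_{\pi(\varw) \pi(n(v))\varv_{\mathfrak i}}(s(z))$, substituting the kernel identity from the previous step, and applying Parseval for $\{\varv_{\mathfrak i}\}$ in $L^2(\BCx, \dx z)$. The Parseval identity collapses $\sum_{\mathfrak i} W_{\pi(\varw)\varv_{\mathfrak i}}(s(z)) \overline{W_{\varv_{\mathfrak i}}(s(y))}$ to the reproducing kernel at $(z,y)$, which together with the $\psi(u)\psi(v)$ factor reassembles into a single evaluation of $j_{\pi,\shskip\psi}(n(u)s(z)\varw n(v))$; the complementary cell $B$ has Haar measure zero and contributes nothing. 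The outcome is the desired formula $J_{\pi,\shskip\psi}(f) = \int_G f(g)\, j_{\pi,\shskip\psi}(g)\, dg$ for all $f \in C^\infty_c (G)$.

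The main obstacle is local integrability of $j_{\pi,\shskip \psi}$ on all of $G$, not merely on $B\varw B$. For compact $K \subset G$ with matrix entries bounded by $M$, the Bruhat variable $z = 1/c$ is bounded below but can be arbitrarily large as $c \to 0$; the remaining coordinates satisfy $|u|=|a/c|\leqslant M|z|$, $|v|=|d/c|\leqslant M|z|$, and $\det g = 1$ forces $uv = z^2 + O(|z|)$ for $|z|$ large. Hence, for each fixed large $z$, the preimage of $K$ in $(u,v)$ is a tubular neighborhood of the curve $uv = z^2$ of measure $O(|z|^2)$. Combining this with the pointwise bound $|j_{\pi,\shskip\psi}(s(z)\varw)| = 2\pi^2 |z|^2 |\bfJ_{it,\shskip m}(4\pi z)| \Lt |z|$ coming from the asymptotic \eqref{appendix: J asymptotic}, together with the Haar density $|z|^{-4}\, du\, dv\, \dx z$, one estimates
\[
\int_K |j_{\pi,\shskip\psi}(g)|\, dg \Lt \int_{|z|\geqslant 1/M} |z|^{-3} \cdot |z|^2 \,\dx z \Lt \int_{|z| \geqslant 1/M} |z|^{-3}\, dz < \infty,
\]
with the corresponding bound near $z=0$ trivial because $K$ forces $|z| \geqslant 1/M$. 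This is the technically delicate point: no oscillation cancellation is exploited, but the thinning of the $(u,v)$-region for large $z$ is essential; the non-generic case $t=0$, $m$ even needs only the mild improvement $|\bfJ_{0,\shskip m}(z)| \Lt \log(2/|z|)$ near $z = 0$, which affects nothing asymptotically.
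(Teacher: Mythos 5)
The first thing to note is that the paper does not prove this theorem: it is quoted from \cite{Chai-Qi-Bessel} (see the sentence preceding the statement in Appendix B), so there is no in-paper proof to compare yours against. Judged on its own terms, your architecture is the standard one and is consistent with the cited source and with \cite{CPS}: realize $\pi$ in its Kirillov model inside $L^2(\BCx, \dx z)$, express the action of $\varw$ there by the integral kernel $j_{\pi,\shskip\psi}(s(yz)\varw)$ (this is exactly the kernel formula recalled after Theorem \ref{thm::Bessel distribution}), and unfold the spectral sum \eqref{3eq: J(f)}. Your local-integrability estimate on the big cell is also correct and is the right computation: on a compact set the Bruhat coordinate $z=1/c$ is bounded below, the fibre over each $z$ has $(u,\varv)$-measure $O(|z|^2)$, and $|j_{\pi,\shskip\psi}(s(z)\varw)|\Lt|z|$ by \eqref{appendix: J asymptotic}, which beats the density $|z|^{-4}$.

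The genuine gap is in the unfolding step. The Whittaker functional $L$ is continuous on $H_\infty$ but not $L^2$-bounded, so $\sum_{\mathfrak i}|L(\varv_{\mathfrak i})|^2=\infty$ and the bilinear sum $\sum_{\mathfrak i}W_{\varv_{\mathfrak i}}(g)\shskip\overline{L(\varv_{\mathfrak i})}$ does not converge pointwise in $g$: the ``reproducing kernel at $(z,y)$'' you invoke is the delta distribution on $\BCx$ concentrated at $z=y$, not a function, and evaluating it at $y=1$ is meaningless without further argument. Hence the interchange of $\sum_{\mathfrak i}$ with $\int_G f(g)\,dg$ --- which is precisely the content of the regularity theorem --- is asserted rather than proved. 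Closing it requires real work: either (i) quantitative decay of $L(\pi(f)\varv_{\mathfrak i})$ in the $K$-type of $\varv_{\mathfrak i}$ (this is why the basis is required to be $K$-isotypic) together with polynomial control of $L(\varv_{\mathfrak i})$, followed by a limiting argument identifying the resulting distribution with integration against the locally integrable $j_{\pi,\shskip\psi}$; or (ii) a Bruhat-theoretic argument that the difference between $J_{\pi,\shskip\psi}$ and $f\mapsto\int_{B\varw B}f(g)\shskip j_{\pi,\shskip\psi}(g)\,dg$ is a $(\psi\times\psi)$-equivariant distribution supported on the small cell $B$, hence zero. Two smaller defects: the factor $\psi(\varv)$ does not pull out of $W_{\varv_{\mathfrak i}}(s(z)\varw\shskip n(\varv))$ where you claim it does, since $n(\varv)$ sits to the right of $\varw$ and acts on the vector rather than through the $(N,\psi)$-equivariance of $L$ (the $\psi(\varv)$ only appears after the kernel identity and a change of variable); and your Gindikin--Karpelevich computation is set up only for the tempered principal series $\pi_{it,\shskip m}$, whereas the theorem is asserted for every irreducible unitary $\pi$, including the complementary series.
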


\subsection{Bessel functions for $\SL_2 (\BC)$} 

The interpretation in representation theory of Bessel functions $j_{\pi, \shskip \psi}$ defined as in \eqref{1eq: defn of J mu m (z)}-\eqref{1eq: Bessel function--NN rule} is the following kernel formula.

\begin{thm}
	Let $\pi$ be an irreducible unitary representation of $G$. We have
\begin{align*}
W_\varv(s(y)\varw)=\int_{\BCx}j_{\pi, \shskip \psi} (s(y z)\varw) W_\varv(s(z)) \dx z
\end{align*}
for all $\varv\in H_{\infty}$.
\end{thm}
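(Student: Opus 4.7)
The plan is to reduce the identity to $y = 1$ via the Weyl relation $s(y)\varw = \varw s(y^{-1})$, then to identify the Bessel kernel $j_{\pi, \shskip \psi}(s(z)\varw)$ as a bilinear sum in the Whittaker model, and finally to invoke the Plancherel normalization of $L$ as a Parseval identity on $\BCx$.

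For the reduction, the substitution $z \mapsto z/y$ (which preserves $\dx z$) converts the right-hand side to $\int_{\BCx} j_{\pi, \shskip \psi}(s(z)\varw) W_\varv(s(z)s(y^{-1})) \dx z$; noting that $W_\varv(s(z)s(y^{-1})) = W_{\pi(s(y^{-1}))\varv}(s(z))$ and $W_\varv(s(y)\varw) = W_{\pi(s(y^{-1}))\varv}(\varw)$, the identity reduces to the $y = 1$ case for $\varv' := \pi(s(y^{-1}))\varv$, which again runs over all of $H_\infty$. Next, comparing \eqref{3eq: J(f)} with Theorem \ref{thm::Bessel distribution} and using the real-analyticity of $j_{\pi, \shskip \psi}$ on $B\varw B$ yields the formal expansion $j_{\pi, \shskip \psi}(g) = \sum_{\mathfrak i} W_{\varv_{\mathfrak i}}(g) \overline{L(\varv_{\mathfrak i})}$. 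Inserting $g = s(z)\varw = \varw s(z^{-1})$, using $W_{\varv_{\mathfrak i}}(\varw s(z^{-1})) = W_{\pi(s(z^{-1}))\varv_{\mathfrak i}}(\varw)$ and $L(\varv_{\mathfrak i}) = W_{\pi(s(z^{-1}))\varv_{\mathfrak i}}(s(z))$, and appealing to the basis-independence of the resulting Whittaker kernel, gives
\[
j_{\pi, \shskip \psi}(s(z)\varw) = \sum_{\mathfrak i} W_{\varv_{\mathfrak i}}(\varw) \overline{W_{\varv_{\mathfrak i}}(s(z))}.
\]

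Finally, the Plancherel normalization of $L$ yields the Parseval expansion $\varv = \sum_{\mathfrak i}\langle \varv, \varv_{\mathfrak i}\rangle \varv_{\mathfrak i}$ in $H$, with $\langle\varv, \varv_{\mathfrak i}\rangle = \int_{\BCx} W_\varv(s(z)) \overline{W_{\varv_{\mathfrak i}}(s(z))} \dx z$. Applying $L \circ \pi(\varw)$ term-by-term and interchanging sum with the $\dx z$ integral will yield the desired identity in the reduced case, namely
\[
W_\varv(\varw) = \sum_{\mathfrak i} \langle \varv, \varv_{\mathfrak i}\rangle W_{\varv_{\mathfrak i}}(\varw) = \int_{\BCx} W_\varv(s(z)) \sum_{\mathfrak i} W_{\varv_{\mathfrak i}}(\varw) \overline{W_{\varv_{\mathfrak i}}(s(z))} \dx z.
\]

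The principal technical points will be (i) justifying the term-by-term application of $L \circ \pi(\varw)$ to a series converging only in the $H$-norm---since $L$ is continuous only on the Fréchet space $H_\infty$, this requires an approximation argument exploiting the $K$-isotypic structure of the basis together with the continuity of $L$ on $K$-finite subspaces---and (ii) establishing the basis-independence of the bilinear Whittaker kernel $\sum_{\mathfrak i} W_{\varv_{\mathfrak i}}(\varw) \overline{W_{\varv_{\mathfrak i}}(s(z))}$ for translated bases which need not themselves be $K$-isotypic; this should be handled via an Abel-type summation argument, or alternatively by checking the equality against test functions in $C_c^\infty(\BCx)$ and invoking the distributional formula for $j_{\pi, \shskip \psi}$ coming from \cite{Chai-Qi-Bessel}.
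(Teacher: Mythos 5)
The paper does not actually prove this theorem: it is quoted, with the proof attributed to \cite[Chapter 4]{Qi-Bessel} (special cases appearing in \cite{B-Mo-Kernel2}, \cite{Mo-Kernel2}, \cite{Baruch-Kernel}). Those proofs work in the Kirillov model and identify the kernel of the Weyl-element action via Mellin transforms and the local functional equation; your proposal instead follows the formal spectral-expansion heuristic, and that is where it breaks down.

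Your reduction to $y=1$ via $s(y)\varw=\varw s(y^{-1})$ is fine, and the term-by-term application of $L\circ \pi(\varw)$ to the $K$-type expansion of a smooth vector is justifiable. The genuine gap is the pointwise expansion $j_{\pi,\shskip\psi}(g)=\sum_{\mathfrak i}W_{\varv_{\mathfrak i}}(g)\overline{L(\varv_{\mathfrak i})}$ and everything downstream of it. The sequence $\big(L(\varv_{\mathfrak i})\big)_{\mathfrak i}$ is \emph{not} square-summable: if it were, $L$ would extend to a bounded functional on $H$, hence $L(\cdot)=\langle\cdot,u\rangle$ for some $u\in H$, and $u$ would be an $N$-eigenvector for a nontrivial unitary character, which an infinite-dimensional irreducible unitary representation of $\SL_2(\BC)$ does not possess (this unboundedness is precisely why $J_{\pi,\shskip\psi}$ is a distribution and why Theorem \ref{thm::Bessel distribution} is a nontrivial regularity statement). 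Consequently the bilinear kernel $\sum_{\mathfrak i}W_{\varv_{\mathfrak i}}(\varw)\overline{W_{\varv_{\mathfrak i}}(s(z))}$ has both coefficient sequences outside $\ell^2$, converges in no absolute sense, and carries no canonical basis-independent meaning; neither Abel summation nor dominated convergence will deliver the basis change $\{\pi(s(z^{-1}))\varv_{\mathfrak i}\}\to\{\varv_{\mathfrak i}\}$ or the final interchange of $\sum_{\mathfrak i}$ with $\int_{\BCx}\dx z$. The fallback of testing against $\phi\in C_c^{\infty}(\BCx)$ does not close the gap either: the distributional identity only controls $\sum_{\mathfrak i}L(\pi(f)\varv_{\mathfrak i})\overline{L(\varv_{\mathfrak i})}$ for $f\in C_c^{\infty}(G)$, and passing from such $f$ to ``multiplication by $\phi$ composed with $\pi(\varw)$'' requires a delta-sequence in the two $N$-directions, i.e.\ exactly the oscillatory analysis that the rigorous proofs replace by the Mellin-transform argument. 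As it stands, the proposal is the standard heuristic that motivates the kernel formula, not a proof of it.
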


This kernel formula is proven in \cite[Chapter 4]{Qi-Bessel} in full generality; some of its special cases may be found in \cite{B-Mo-Kernel2,Mo-Kernel2} and \cite{Baruch-Kernel}. It is first introduced in \cite{CPS} for $\SL_2 (\BR)$. 

It is proven in \cite{Chai-Qi-Bessel} that the function $j_{\pi,\shskip \psi}$ in Theorem \ref{thm::Bessel distribution} is exactly the Bessel function defined as in \eqref{1eq: defn of J mu m (z)}-\eqref{1eq: Bessel function--NN rule}. Thus coincide the two Bessel distributions $J_{\pi,\shskip \psi}$ defined in the \nameref{sec: Intro}  and \S \ref{sec: Bessel distributions}. 

%It does depend on the choice of the Whittaker functional $L$, the $G$ invariant inner product $\langle \,  ,\shskip \rangle $ and the Haar measure $d g$. Hence $ J_{\pi,\shskip \psi} $ is defined up to a positive constant. 

%	\bibliographystyle{alphanum}
	%    Insert the bibliography data here.
%	\bibliography{references}

\begin{thebibliography}{vdBK}
	
	\bibitem[AS]{A-S}
	M.~Abramowitz and I.~A. Stegun.
	\newblock {\em Handbook of {M}athematical {F}unctions with {F}ormulas,
		{G}raphs, and {M}athematical {T}ables},   {\em National Bureau of
		Standards Applied Mathematics Series}, 55.
	\newblock Washington, D.C., 1964.
	
	\bibitem[Bar1]{Baruch-GL2}
	E.~M. Baruch.
	\newblock On {B}essel distributions for {${\rm GL}_2$} over a {$p$}-adic field.
	\newblock {\em J. Number Theory}, 67(2):190--202, 1997.
	
	\bibitem[Bar2]{Baruch-Split}
	E.~M. Baruch.
	\newblock On {B}essel distributions for quasi-split groups.
	\newblock {\em Trans. Amer. Math. Soc.}, 353(7):2601--2614, 2001.
	
	\bibitem[BBA]{Baruch-Kernel}
	E.~M. Baruch and O.~Beit-Aharon.
	\newblock A kernel formula for the action of the {W}eyl element in the
	{K}irillov model of {$\mathrm{SL}(2,\mathbb{C})$}.
	\newblock {\em J. Number Theory}, 146:23--40, 2015.
	
	\bibitem[BM1]{BaruchMao-NA}
	E.~M. Baruch and Z.~Mao.
	\newblock Bessel identities in the {W}aldspurger correspondence over a
	{$p$}-adic field.
	\newblock {\em Amer. J. Math.}, 125(2):225--288, 2003.
	
	\bibitem[BM2]{BaruchMao-Real}
	E.~M. Baruch and Z.~Mao.
	\newblock Bessel identities in the {W}aldspurger correspondence over the real
	numbers.
	\newblock {\em Israel J. Math.}, 145:1--81, 2005.
	
	\bibitem[BM3]{BaruchMao-Whittaker}
	E.~M. Baruch and Z.~Mao.
	\newblock A {W}hittaker-{P}lancherel inversion formula for {${\rm SL}(2,{\mathbb{R}})$}.
	\newblock {\em J. Funct. Anal.}, 238(1):221--244, 2006.
	
	\bibitem[BM4]{B-Mo-Kernel2}
	R.~W. Bruggeman and Y.~Motohashi.
	\newblock A note on the mean value of the zeta and {$L$}-functions. {XIII}.
	\newblock {\em Proc. Japan Acad. Ser. A Math. Sci.}, 78(6):87--91, 2002.
	
	\bibitem[BM5]{B-Mo}
	R.~W. Bruggeman and Y.~Motohashi.
	\newblock Sum formula for {K}loosterman sums and fourth moment of the
	{D}edekind zeta-function over the {G}aussian number field.
	\newblock {\em Funct. Approx. Comment. Math.}, 31:23--92, 2003.
	
	\bibitem[BP1]{B-P-1}
	R.~Beuzart-Plessis.
	\newblock A local trace formula for the {G}an-{G}ross-{P}rasad conjecture for
	unitary groups: the {A}rchimedean case.
	\newblock {\em arXiv:1506.01452, to appear in Ast\'{e}risque}, 2015.
	
	\bibitem[BP2]{B-P-2}
	R.~Beuzart-Plessis.
	\newblock Plancherel formula for {$\mathrm{GL}_n (F) \backslash \mathrm{GL}_n
		(E) $} and applications to the {I}chino-{I}keda and formal degree conjectures
	for unitary groups.
	\newblock {\em   arXiv:1812.00047, preprint}, 2018.
	
	
	
	\bibitem[CPS]{CPS}
	J.~W. Cogdell and I.~Piatetski-Shapiro.
	\newblock {\em The {A}rithmetic and {S}pectral {A}nalysis of {P}oincar\'e
		{S}eries}.
	\newblock Perspectives in Mathematics, Vol. 13. Academic Press, Inc., Boston,
	MA, 1990.
	
	\bibitem[CQ]{Chai-Qi-Bessel}
	J.~Chai and Z.~Qi.
	\newblock Bessel identities in the {W}aldspurger correspondence over the
	complex numbers.
	\newblock {\em arXiv:1802.01229, to appear in Israel J. Math.}, 2018.
	
	\bibitem[Del]{Delorme-W-P}
	P.~Delorme.
	\newblock Formule de {P}lancherel pour les fonctions de {W}hittaker sur un
	groupe r\'{e}ductif {$p$}-adique.
	\newblock {\em Ann. Inst. Fourier (Grenoble)}, 63(1):155--217, 2013.
	
	
	\bibitem[HM]{Harcos-Michel}
	G.~Harcos and P.~Michel.
	\newblock The subconvexity problem for {R}ankin-{S}elberg {$L$}-functions and
	equidistribution of {H}eegner points. {II}.
	\newblock {\em Invent. Math.}, 163(3):581--655, 2006.
	
	
	\bibitem[Jac]{Jacquet-RTF}
	H.~Jacquet.
	\newblock On the nonvanishing of some {$L$}-functions.
	\newblock {\em Proc. Indian Acad. Sci. Math. Sci.}, 97(1-3):117--155 (1988),
	1987.
	
	\bibitem[Kna]{Knapp-Book}
	A.~W. Knapp.
	\newblock {\em Representation {T}heory of {S}emisimple {G}roups, an {O}verview
		{B}ased on {E}xamples},   {\em Princeton Mathematical Series}, 36.
	\newblock Princeton University Press, Princeton, NJ, 1986.
	
	\bibitem[Kuz]{Kuznetsov}
	N.~V. Kuznetsov.
	\newblock {P}etersson's conjecture for cusp forms of weight zero and {L}innik's
	conjecture. {S}ums of {K}loosterman sums.
	\newblock {\em Math. Sbornik}, 39:299--342, 1981.
	
	\bibitem[Lan]{Landau-Bessel}
	L.~J. Landau.
	\newblock Bessel functions: monotonicity and bounds.
	\newblock {\em J. London Math. Soc. (2)}, 61(1):197--215, 2000.
	
	\bibitem[LG]{B-Mo2}
	H.~Lokvenec-Guleska.
	\newblock {\em {S}um {F}ormula for $\mathrm{SL}_2$ over {I}maginary {Q}uadratic
		{N}umber {F}ields}.
	\newblock Ph.D. Thesis. Utrecht University, 2004.
	
	\bibitem[Mot]{Mo-Kernel2}
	Y.~Motohashi.
	\newblock Mean values of zeta-functions via representation theory.
	\newblock In {\em Multiple {D}irichlet {S}eries, {A}utomorphic {F}orms, and
		{A}nalytic {N}umber {T}heory},   {\em Proc. Sympos. Pure Math.}, 75, 
	pages 257--279. Amer. Math. Soc., Providence, RI, 2006.
	
	
	
	\bibitem[Olv]{Olver}
	F.~W.~J. Olver.
	\newblock {\em Asymptotics and {S}pecial {F}unctions}.
	\newblock Academic Press, New York-London, 1974. 
	
	\bibitem[Qi1]{Qi-Thesis}
	Z.~Qi.
	\newblock {\em {T}heory of {B}essel {F}unctions of {H}igh {R}ank}.
	\newblock Ph.D. Thesis. The Ohio State University, 2015.
	
	\bibitem[Qi2]{Qi-Bessel}
	Z.~Qi.
	\newblock Theory of fundamental {B}essel functions of high rank.
		\newblock {\em arXiv:1612.03553, to appear in Mem. Amer. Math. Soc.}, 2016.
	
	\bibitem[Qi3]{Qi-Kuz}
	Z.~Qi.
	\newblock On the {K}uznetsov trace formula for {$\mathrm{PGL}_2(\Bbb{C})$}.
	\newblock {\em J. Funct. Anal.}, 272(8):3259--3280, 2017.
	
	
	\bibitem[Qi4]{Qi-II-G}
	Z.~Qi.
	\newblock On the {F}ourier transform of {B}essel functions over complex
	numbers---{II}: the general case. 
	\newblock {\em Trans. Amer. Math. Soc.}, 372:2829--2854, 2019.
	
	\bibitem[ST]{Sears-Titchmarsh}
	D.~B. Sears and E.~C. Titchmarsh.
	\newblock Some eigenfunction formulae.
	\newblock {\em Quart. J. Math., Oxford Ser. (2)}, 1:165--175, 1950.
	
	\bibitem[SV]{Sak-Venkatesh-Periods}
	Y.~Sakellaridis and A.~Venkatesh.
	\newblock Periods and harmonic analysis on spherical varieties.
	\newblock {\em Ast\'{e}risque}, (396):viii+360, 2017.
	
	\bibitem[Tit]{Titchmarsh-Eigenfunction}
	E.~C. Titchmarsh.
	\newblock {\em Eigenfunction {E}xpansions {A}ssociated with {S}econd-{O}rder
		{D}ifferential {E}quations. {P}art {I}}.
	\newblock Second Edition. Clarendon Press, Oxford, 1962.
	
	\bibitem[vdBK]{vdBK-Wallach}
	E.~P. van~den Ban and J.~J. Kuit.
	\newblock Cusp forms for reductive symmetric spaces of split rank one.
	\newblock {\em Represent. Theory}, 21:467--533, 2017.
	
	\bibitem[Wal1]{RRG-I}
	N.~R. Wallach.
	\newblock {\em Real {R}eductive {G}roups. {I}}, {\em Pure and
		Applied Mathematics}, 132.
	\newblock Academic Press, Inc., Boston, MA, 1988.
	
	\bibitem[Wal2]{RRG-II}
	N.~R. Wallach.
	\newblock {\em Real {R}eductive {G}roups. {II}}, {\em Pure and
		Applied Mathematics}, 132.
	\newblock Academic Press, Inc., Boston, MA, 1992.
	
	\bibitem[Wal3]{Wallach-Correction}
	N.~R. Wallach.
	\newblock On the {W}hittaker {P}lancherel theorem for real reductive groups.
	\newblock {\em  arXiv:1705.06787, preprint}, 2017.
	
	\bibitem[Wat]{Watson}
	G.~N. Watson.
	\newblock {\em A {T}reatise on the {T}heory of {B}essel {F}unctions}.
	\newblock Cambridge University Press, Cambridge, England; The Macmillan
	Company, New York, 1944.
	
\end{thebibliography}

\end{document}